\documentclass[11pt]{amsart}
\usepackage[margin = 1in]{geometry}
\usepackage{amsmath, amssymb, amsthm}
\usepackage{enumerate}
\usepackage[all]{xypic}
\usepackage{comment}
\usepackage{url}

\usepackage[usenames,dvipsnames]{xcolor}

\newtheorem{thm}{Theorem}[section]
\newtheorem{lemma}[thm]{Lemma}

\newtheorem{prop}[thm]{Proposition}
\newtheorem{cor}[thm]{Corollary}

\newtheorem{asmp}{Assumption}

\newtheoremstyle{named}{}{}{\itshape}{}{\bfseries}{.}{.5em}{#3}
\theoremstyle{named}

\theoremstyle{remark}
\newtheorem{rem}[thm]{Remark}

\makeatletter
\def\imod#1{\allowbreak\mkern5mu({\operator@font mod}\,\,#1)}
\makeatother

\let\hom\relax
\DeclareMathOperator{\hom}{Hom}

\newcommand{\Z}[0]{\mathbb{Z}}
\newcommand{\F}[0]{\mathbb{F}}
\newcommand{\C}[0]{\mathbb{C}}
\newcommand{\R}[0]{\mathbb{R}}
\newcommand{\Q}[0]{\mathbb{Q}}
\newcommand{\om}[0]{\omega}
\newcommand{\vep}[0]{\varepsilon}
\newcommand{\im}[0]{\operatorname{im}}
\newcommand{\cok}[0]{\operatorname{cok}}
\newcommand{\ord}[0]{\operatorname{ord}}
\newcommand{\gal}[0]{\operatorname{Gal}}
\newcommand{\sel}[0]{\operatorname{Sel}}
\newcommand{\res}[0]{\operatorname{res}}
\newcommand{\rk}[0]{\operatorname{rank}}
\newcommand{\rkan}[0]{\operatorname{rank}_{an}}

\newcommand{\ov}[1]{\overline{#1}}
\newcommand{\wt}[1]{\widetilde{#1}}
\newcommand{\mc}[1]{\mathcal{#1}}
\newcommand{\wh}[1]{\widehat{#1}}
\newcommand{\gen}[1]{\langle #1 \rangle}
\newcommand{\abs}[1]{\left| #1 \right|}

\newcommand{\smat}[4]{\left(\begin{smallmatrix} #1 & #2 \\#3 & #4\end{smallmatrix}\right)}

\title{Quadratic Twists of Elliptic Curves with 3-Selmer Rank 1}
\author{Zane Kun Li}

\address{Department of Mathematics, UCLA, Los Angeles, CA 90095-1555}
\email{zkli@math.ucla.edu}

\keywords{Goldfeld conjecture, elliptic curve, quadratic twist, Selmer group}
\subjclass[2010]{14H52, 11G05}

\begin{document}
\begin{abstract}
A weaker form of a 1979 conjecture of Goldfeld states that for every elliptic curve $E/\Q$,
a positive proportion of its quadratic twists $E^{(d)}$ have rank 1.
Using tools from Galois cohomology, we give criteria on $E$ and $d$ which force a positive proportion of the quadratic twists of $E$ to have
3-Selmer rank 1 and global root number $-1$. We then give four nonisomorphic infinite families
of elliptic curves $E_{m, n}$ which satisfy these criteria.
Conditional on the rank part of the Birch and Swinnerton-Dyer conjecture,
this verifies the aforementioned conjecture for infinitely many elliptic curves.
Our elliptic curves are easy to give explicitly and we state precisely which quadratic
twists $d$ to use. Furthermore, our methods
have the potential of being generalized to elliptic curves over other number fields.
\end{abstract}

\maketitle

\section{Introduction}
Let $E$ be an elliptic curve over $\Q$. For a squarefree integer $d \neq 0$,
if $E$ is given by the short Weierstrass form $y^{2} = x^{3} + ax + b$, then
the quadratic twist $E^{(d)}/\Q$ is given by $dy^{2} = x^{3} + ax + b$.
Note that $E^{(d)}$ is the unique elliptic curve (up to $\Q$-isomorphism) isomorphic to $E$ over $\Q(\sqrt{d})$.
In 1979, Goldfeld \cite{goldfeld} conjectured that for any elliptic curve $E/\Q$,
$$\sum_{\substack{\abs{d} \leq X\\\mu(d)^{2} = 1}} \rk(E^{(d)}) \sim \frac{1}{2}\sum_{\substack{\abs{d} \leq X\\\mu(d)^{2} = 1}} 1$$
as $X \rightarrow \infty$.
Assuming both the Parity Conjecture and Goldfeld's Conjecture, this implies that for each fixed $E/\Q$, half the quadratic twists
of $E$ should be of rank 0 and the other half rank 1. A weaker form of Goldfeld's conjecture is that for $r = 0$ or $1$,
\begin{align*}
N_{r}(X) := \#\{\text{squarefree } d \in \Z : \abs{d} \leq X, \rk(E^{(d)}) = r\} \gg X
\end{align*}
as $X \rightarrow \infty$. That is, a positive proportion of quadratic twists of $E$ have rank $r$. It is this weaker form of Goldfeld's Conjecture that we study.

In 1998, Ono and Skinner in \cite{os98} using results
of Waldspurger and of Friedberg and Hoffstein showed that $N_{0}(X) \gg X/\log X$ for all elliptic curves $E/\Q$.
This is currently the best known general result.
Vatsal in \cite{v97} showed that $N_{0}(X) \gg X$ for any semistable elliptic curve $E/\Q$ with
a rational point of order 3 and good ordinary reduction at 3.

For the $r = 1$ case,
the best known general result is by Perelli and Pomykala in \cite{pp97} who showed that $N_{1}(X) \gg_{\vep} X^{1 - \vep}$ for all $E/\Q$.
Up until 2009, $N_{1}(X) \gg X$ was only known unconditionally for two elliptic curves
$y^{2} + y = x^{3} + x^{2} - 9x - 15$
in \cite{v98} and
$y^{2}+ y = x^{3} + x^{2} - 23x - 50$
in \cite{byeon04}.
However in 2009, Byeon et al. in \cite{bjk} showed that $N_{1}(X) \gg X$ for infinitely many elliptic curves over $\Q$.
That is, they show that by the solution to a variant of the binary Goldbach problem for polynomials, there are infinitely many
integers $m$ such that $8(9m+4)^{3} = 27p + q$ for some primes $p(\neq 3)$ and $q$. Then for such $m, p$, and $q$ (for example such a triple
could be $(m, p, q) = (1, 7, 17387)$), Byeon et al.
are able to show that the optimal elliptic curve of the isogeny class of
$y^{2} + 2(9m + 4)xy + py = x^{3}$ is such that $N_{1}(X) \gg X$. Since there are infinitely many such
$m$, this yields infinitely many elliptic curves such that $N_{1}(X) \gg X$.
Previous work (\cite{bjk, by11}) have used tools such as Dedekind eta products
to approach this problem. We shall approach this problem differently, that is, from the point of view of Selmer groups and Galois cohomology.

From now on, unless otherwise stated, $E/\Q$ will be always given by the global minimal Weierstrass equation
and hence the primes dividing the discriminant $\Delta_{E}$ are exactly the primes of bad reduction and the conductor $N_{E} \mid \Delta_{E}$.

%
%
\begin{thm}\label{resthm}
Fix a semistable elliptic curve $E/\Q$ such that 3 is of good reduction, $\ord_{2}(\Delta_{E})$ is odd, $\ord_{2}(N_{E}) = 1$, $3 \nmid \ord_{v}(\Delta_{E})$
for all places $v$ of bad reduction, and $E = E'/\langle P \rangle$ where $P$ is a rational 3-torsion point of the elliptic curve $E'/\Q$.

Suppose we choose quadratic twists $d$ as follows. Let $d$ be a squarefree positive integer such that $(d, \Delta_{E}) = 1$,
$3 \nmid h(\Q(\sqrt{d}))$ where $h(\Q(\sqrt{d}))$ is the class number of $\Q(\sqrt{d})$,
$d \equiv 1 \imod{3}$, $d \equiv 1 \imod{4}$, and for each prime $\ell \mid N_{E}$, we require $d$ as follows:
\begin{align*}
\begin{array}{ccc|c}
\text{Hypotheses on $\ell$} &&& \text{Choice for $d$}\\\hline
\text{$E$ has split reduction at $\ell$} & \ell = 2 && d \equiv 5 \imod{8}\\
& \ell \neq 2& \ell \equiv 1 \imod{3} & d \not\equiv \square \imod{\ell}\\
& \ell \neq 2& \ell \equiv 2 \imod{3} & d \equiv 1 \imod{\ell}\\
\text{$E$ has nonsplit reduction at $\ell$} & \ell = 2 && d \equiv 1 \imod{8}\\
& \ell \neq 2& \ell \equiv 1 \imod{3} & d \equiv 1 \imod{\ell}\\
& \ell \neq 2& \ell \equiv 2 \imod{3} & d \not\equiv \square \imod{\ell}.
\end{array}
\end{align*}
Then $d$ satisfies a congruence mod $12N_{E}$ and $3 \nmid h(\Q(\sqrt{d}))$.

Finally, suppose for all $d$ chosen above, $E$ is such that the global root number $\om(E^{(d)}) = -1$ and the torsion subgroup $E^{(d)}(\Q)_{\textrm{tors}} = 0$.
Then $\sel_{3}(E^{(d)}/\Q) = \Z/3\Z$.
Furthermore, a positive proportion of the quadratic twists $E^{(d)}/\Q$ have 3-Selmer group equal to $\Z/3\Z$ and global root number $-1$. In particular,
such twists are precisely given by the $d$ mentioned above.
\end{thm}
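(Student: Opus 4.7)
I would exploit the rational $3$-isogeny $\phi\colon E' \to E$ with $\ker\phi = \langle P\rangle$ and its dual $\hat\phi\colon E \to E'$ to reduce $3$-descent on $E^{(d)}$ to two isogeny descents. Because $P$ is $\Q$-rational, $E'[\phi] \cong \Z/3\Z$ is the trivial Galois module, and by the Weil pairing $E[\hat\phi] \cong \mu_3$. Twisting gives $E'^{(d)}[\phi^{(d)}] \cong \Z/3\Z \otimes \chi_d$ and $E^{(d)}[\hat\phi^{(d)}] \cong \mu_3 \otimes \chi_d$, where $\chi_d$ is the quadratic character of $\Q(\sqrt d)/\Q$. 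The factorization $[3] = \phi^{(d)}\hat\phi^{(d)}$ produces, via the snake lemma on the Kummer diagrams, the exact sequence
\[
0 \to \frac{E'^{(d)}(\Q)[\phi^{(d)}]}{\hat\phi^{(d)} E^{(d)}(\Q)[3]} \to \sel_{\hat\phi^{(d)}}(E^{(d)}/\Q) \to \sel_3(E^{(d)}/\Q) \to \sel_{\phi^{(d)}}(E'^{(d)}/\Q),
\]
whose leftmost term vanishes since $E^{(d)}(\Q)_{\mathrm{tors}} = 0$ and $d$ is not a square. Thus it suffices to bound the two outer Selmer groups and then invoke parity.

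For $\sel_{\phi^{(d)}}(E'^{(d)}/\Q)$, inflation--restriction to $\Q(\sqrt d)$ (where the twist trivializes and the restriction is injective since $|\gal(\Q(\sqrt d)/\Q)| = 2$ is coprime to $3$) identifies this as a subgroup of $\hom(\gal_{\Q(\sqrt d)}, \Z/3\Z)^-$, the minus eigenspace under $\gal(\Q(\sqrt d)/\Q)$. The tabulated congruences on $d$ at each $\ell \mid N_E$, together with $d \equiv 1 \imod{12}$ and good reduction of $E$ at $3$, are tuned so the Selmer local conditions at every finite prime and at $\infty$ force each class to be everywhere unramified, i.e.\ to lie in $(\mathrm{Cl}(\Q(\sqrt d))/3)^-$. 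The hypothesis $3 \nmid h(\Q(\sqrt d))$ annihilates this, so $\sel_{\phi^{(d)}}(E'^{(d)}/\Q) = 0$.

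For $\sel_{\hat\phi^{(d)}}(E^{(d)}/\Q)$, I would pass to $K := \Q(\sqrt d, \zeta_3)$, where $\mu_3 \otimes \chi_d$ trivializes and Kummer theory gives $H^1(K, \mu_3) = K^\times/K^{\times 3}$; descent to $\Q$ extracts the appropriate $\chi_d$-eigenspace. Using $\ord_2(\Delta_E)$ odd, $\ord_2(N_E) = 1$, $3 \nmid \ord_v(\Delta_E)$ at bad $v$, and the tabulated congruences on $d$, I would carry out a prime-by-prime analysis, running Tate's algorithm on both $E$ and $E^{(d)}$ to compute the local Selmer conditions and restrict each class to an element of $\mathcal{O}_K^\times/\mathcal{O}_K^{\times 3}$ or $\mathrm{Cl}(K)[3]$ in the right eigenspace. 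A Dirichlet unit count combined with $3 \nmid h(\Q(\sqrt d))$ then yields $|\sel_{\hat\phi^{(d)}}(E^{(d)}/\Q)| \leq 3$.

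Combining the two bounds in the exact sequence gives $|\sel_3(E^{(d)}/\Q)| \leq 3$; then since $E^{(d)}(\Q)_{\mathrm{tors}} = 0$ and $\om(E^{(d)}) = -1$, the $3$-parity theorem (known for curves with a rational $3$-isogeny) forces $\dim_{\F_3} \sel_3(E^{(d)}/\Q)$ to be odd, hence equal to $1$, giving $\sel_3(E^{(d)}/\Q) = \Z/3\Z$. For the positive-proportion statement, the chosen congruences are compatible and cut out a nonempty residue class modulo $12 N_E$, which has positive density among squarefree integers; Davenport--Heilbronn then guarantees a positive density of these $d$ also satisfy $3 \nmid h(\Q(\sqrt d))$. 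The main obstacle is the prime-by-prime local Selmer computation in the two bounding steps: each entry of the table on $d$ must exactly cancel the corresponding local cohomology in $H^1(\Q_\ell, \Z/3\Z \otimes \chi_d)$ or $H^1(\Q_\ell, \mu_3 \otimes \chi_d)$, and the hypotheses on $\ord_2(\Delta_E)$, $\ord_2(N_E)$, and $\ord_v(\Delta_E)$ at bad $v$ are precisely what makes this case analysis tractable.
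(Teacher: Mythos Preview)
Your approach is essentially the paper's, repackaged: the exact sequence $0 \to \F_3(\chi_d\omega) \to E^{(d)}[3] \to \F_3(\chi_d) \to 0$ that the paper uses is literally the $\hat\phi^{(d)}$-kernel filtration, so your $\sel_{\hat\phi^{(d)}}$ and $\sel_{\phi^{(d)}}$ are the paper's $\ker\phi$ and (the target of) $\im\phi$. Both arguments kill the $\F_3(\chi_d)$ piece via $3\nmid h(\Q(\sqrt d))$ and then finish by $3$-parity and Nakagawa--Horie (the congruence-refined Davenport--Heilbronn you need for the residue class mod $12N_E$).

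The one genuine technical difference is how you bound the $\F_3(\chi_d\omega)$ piece. The paper does \emph{not} compute in $K=\Q(\sqrt d,\zeta_3)$; instead, after showing $\sel_3(E^{(d)}/\Q)\subset H^1(\Q,E^{(d)}[3];\{3\})$ by a local analysis of $E^{(d)}(\Q_\ell)/3E^{(d)}(\Q_\ell)$ via the filtration $E_1\subset E_0\subset E$ (with a delicate point at $\ell=2$, where this quotient is $\Z/3\Z$ yet the classes are still unramified), it invokes the Greenberg--Wiles/Poitou--Tate formula to get directly
\[
\#H^1(\Q,\F_3(\chi_d\omega);\{3\}) \;=\; 3\cdot \#H^1(\Q,\F_3(\chi_d);\varnothing) \;=\; 3 \quad (d>0).
\]
Your proposed ``Dirichlet unit count combined with $3\nmid h(\Q(\sqrt d))$'' is not quite self-contained: the field cut out by $\omega\chi_d$ is $\Q(\sqrt{-3d})$, not $\Q(\sqrt d)$, so the class-number hypothesis only controls this side through Scholz reflection---which is exactly what the Greenberg--Wiles formula encodes. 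Either route works, but the paper's duality argument is cleaner and avoids any eigenspace bookkeeping in $K$.
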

%

In the lemmas and propositions leading to the proof of Theorem \ref{resthm}, we have mentioned precisely which assumptions on $E$ and $d$ we use.
Most of the assumptions above are technical assumptions on $E$ or $d$ that help us simplify the analysis of the 3-Selmer group since we are just aiming
to find a positive proportion of quadratic twists.
We however remark that the congruence conditions resulting in a congruence mod $12N_{E}$ is used in Section \ref{infd} to show that the quadratic twist has almost everywhere unramified
3-Selmer group. Furthermore, the assumption that for all chosen $d$, $E$ is such that the global root number $\om(E^{(d)}) = -1$ and the torsion subgroup $E^{(d)}(\Q)_{\textrm{tors}} = 0$
is used in the proof of Proposition \ref{3selrank} so that we can consider instead $\sel_{3^{\infty}}(E^{(d)}/\Q)$ and use the $p$-parity conjecture.

\begin{cor}\label{rescor}
Assume that the rank part of the Birch and Swinnerton-Dyer conjecture is true. If $E$ is as in Theorem \ref{resthm}, then $E$ is an elliptic curve
satisfying $N_{1}(X) \gg X$.
\end{cor}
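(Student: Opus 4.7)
The plan is to exploit the short exact sequence coming from the definition of the Selmer group, together with the root number information and BSD, to pin down the rank of $E^{(d)}$ exactly.

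First I would write down the fundamental exact sequence
\[
0 \to E^{(d)}(\Q)/3E^{(d)}(\Q) \to \sel_{3}(E^{(d)}/\Q) \to \Sha(E^{(d)}/\Q)[3] \to 0.
\]
By Theorem \ref{resthm}, for the quadratic twists $d$ chosen as prescribed, we have $\sel_{3}(E^{(d)}/\Q) \cong \Z/3\Z$, so this sequence forces $\dim_{\F_{3}} E^{(d)}(\Q)/3E^{(d)}(\Q) \leq 1$. Since the hypotheses also give $E^{(d)}(\Q)_{\textrm{tors}} = 0$ for these $d$, the Mordell--Weil group has no 3-torsion, and hence $E^{(d)}(\Q)/3E^{(d)}(\Q) \cong (\Z/3\Z)^{\rk(E^{(d)})}$. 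Therefore $\rk(E^{(d)}) \in \{0, 1\}$.

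Next I would use the root number. Theorem \ref{resthm} provides that the same set of $d$ has $\om(E^{(d)}) = -1$, i.e.\ the sign of the functional equation of $L(E^{(d)}, s)$ is negative, so the analytic rank $\rkan(E^{(d)})$ is odd. Invoking the rank part of BSD gives $\rk(E^{(d)}) = \rkan(E^{(d)})$, which is odd. Combined with the bound $\rk(E^{(d)}) \leq 1$ from the previous paragraph, this forces $\rk(E^{(d)}) = 1$.

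Finally, Theorem \ref{resthm} asserts that a positive proportion of squarefree $d$ with $|d| \leq X$ satisfy the required congruence conditions (the conditions mod $12N_{E}$ together with $3 \nmid h(\Q(\sqrt{d}))$, and are such that the conclusions on the Selmer group, root number, and torsion hold). For each such $d$ we have just shown $\rk(E^{(d)}) = 1$, so $N_{1}(X) \gg X$ as desired. I expect no serious obstacle here: essentially all of the content has been placed in Theorem \ref{resthm}, and the corollary is a short deduction combining the Selmer bound with the parity information once BSD is assumed; the only mild subtlety is remembering to use the torsion-free assumption to convert the 3-Selmer bound into a rank bound.
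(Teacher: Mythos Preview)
Your proof is correct and follows essentially the same route as the paper: use $\om(E^{(d)})=-1$ together with BSD to get that $\rk(E^{(d)})$ is odd, use $\sel_{3}(E^{(d)}/\Q)=\Z/3\Z$ to bound $\rk(E^{(d)})\leq 1$, and conclude $\rk(E^{(d)})=1$ for a positive proportion of $d$. The only difference is cosmetic: you spell out the exact sequence and the role of $E^{(d)}(\Q)_{\textrm{tors}}=0$ in converting the Selmer bound into a rank bound, whereas the paper simply asserts that 3-Selmer rank $1$ gives $\rk\leq 1$.
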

We now give infinitely many elliptic curves satisfying the assumptions above.
Let $m = 1, 7, 13, 19$ and $n$ such that
\begin{align}\label{choice}
\mp 2(m + 24n)(62208n^{2} + (5184m \mp 432)n + (108m^{2} \mp 18m + 1))
\end{align}
is squarefree.\footnote{We shall use the following convention throughout this paper: Choose either $-$ in $\mp$
or $+$ in $\mp$ for \eqref{choice}. If one chose $-$, then all other expressions with $\mp$ are to be interpreted
as $-$ and all other expressions with $\pm$ are to be interpreted as $+$. Similarly, if
one instead chose $+$.}
By a theorem of Erd\"{o}s, there are infinitely many such $n$.
Define $A = 18(m + 24n) \mp 1$, $B = \pm 4/9$, $D = -3$, and $r = 1/3 + A^{2}$.
Let $$E_{m, n}/\Q \colon y^{2} + xy = x^{3} + H(m, n)x + J(m, n)$$
where $H(m, n) = (2ABD + 2A^{2}Dr + 3r^{2})/16$ and $J(m, n) = (B^{2}D + 2ABDr + A^{2}Dr^{2} + r^{3})/64$.
As we show in Lemmas \ref{exlem1} and \ref{exlem2}, both $H(m, n)$ and $J(m, n)$ are integers.

\begin{thm}\label{exthm}
For each $m = 1, 7, 13, 19$ and $n$ as in \eqref{choice}, $E_{m, n}/\Q$ is an elliptic curve with a positive proportion
of its quadratic twists having 3-Selmer group $\Z/3\Z$ and global root number $-1$.
\end{thm}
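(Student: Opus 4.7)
The plan is to reduce to Theorem \ref{resthm} by verifying each of its hypotheses for the curves $E_{m,n}$ and exhibiting a positive-density set of squarefree $d$'s meeting its constraints.

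For the structural hypotheses on $E_{m,n}$, the explicit parameters $A = 18(m + 24n) \mp 1$, $B = \pm 4/9$, $D = -3$, $r = 1/3 + A^{2}$ and the formulas defining $H(m,n), J(m,n)$ are precisely the outputs of V\'elu's formulas for a 3-isogeny, so $E_{m,n}$ comes by construction as $E'_{m,n}/\langle P \rangle$ for some $E'_{m,n}/\Q$ with a rational 3-torsion point $P$; this source curve can be written down explicitly. Integrality of $H(m,n)$ and $J(m,n)$ is the content of the forthcoming Lemmas \ref{exlem1} and \ref{exlem2}. A direct computation of the discriminant $\Delta_{E_{m,n}}$ should factor it as $\pm 2^{\mathrm{odd}}$ times the squarefree integer appearing in \eqref{choice}, which simultaneously gives $\ord_{2}(\Delta_{E_{m,n}})$ odd and $\ord_{v}(\Delta_{E_{m,n}}) = 1$ at every odd bad prime (so in particular not divisible by $3$), and also shows $3 \nmid \Delta_{E_{m,n}}$, giving good reduction at $3$. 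Semistability and $\ord_{2}(N_{E_{m,n}}) = 1$ then drop out of Tate's algorithm: every odd bad prime has discriminant valuation $1$, and at $2$ the model $y^{2} + xy = x^{3} + Hx + J$ has $a_{1} = 1$, forcing multiplicative reduction.

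It remains to produce a positive density of $d$'s and to check the root number and torsion conditions. The sieve for $d$ is routine: the conditions $d \equiv 1 \imod{3}$, $d \equiv 1 \imod{4}$, and the per-prime conditions on $d$ modulo each $\ell \mid N_{E_{m,n}}$ cut out a union of congruence classes modulo $12 N_{E_{m,n}}$, which, intersected with coprimality with $\Delta_{E_{m,n}}$, still contains a positive density of squarefree integers, and the constraint $3 \nmid h(\Q(\sqrt{d}))$ holds on a positive density subset by a Davenport--Heilbronn-type result. The main obstacle is verifying that $\om(E_{m,n}^{(d)}) = -1$ for every such $d$. Since $E_{m,n}$ is semistable and $(d, \Delta_{E_{m,n}}) = 1$, the twist remains semistable and each local root number is a function of the reduction type of $E_{m,n}$ at $v$ and of the quadratic character $\chi_{d}$ at $v$; the congruence conditions on $d$ imposed by Theorem \ref{resthm} are chosen exactly to pin these local factors down, so the global sign reduces to a definite expression depending only on the reduction pattern of $E_{m,n}$, i.e.\ on $m$ modulo a small modulus. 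The allowed values $m \in \{1,7,13,19\}$, with the synchronized $\pm/\mp$ convention, are precisely the residues for which this product equals $-1$. Triviality of $E_{m,n}^{(d)}(\Q)_{\textrm{tors}}$ follows from Mazur's classification together with reduction at a small prime of good reduction: rational $2$-torsion is unaffected by the twist and is ruled out directly on $E_{m,n}$, while the $3$-isogeny structure plus reduction controls the $3$-torsion. Once all hypotheses are verified, Theorem \ref{resthm} applies to $E_{m,n}$ and yields the claimed positive proportion of twists with $\sel_{3}(E_{m,n}^{(d)}/\Q) = \Z/3\Z$ and $\om(E_{m,n}^{(d)}) = -1$, simultaneously for each of the four infinite families.
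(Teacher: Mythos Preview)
Your overall strategy matches the paper's: verify each hypothesis of Theorem \ref{resthm} for $E_{m,n}$, then invoke that theorem. Most of your sketch is accurate and aligns with what the paper does in Section \ref{example}.

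There is one genuine error. You assert that since $E_{m,n}$ is semistable and $(d, \Delta_{E_{m,n}}) = 1$, ``the twist remains semistable.'' This is false: $E_{m,n}^{(d)}$ acquires \emph{additive} reduction at every prime dividing $d$ (cf.\ the discussion after Assumption \ref{asmp1}). This matters for the root-number computation, because at each prime $p \mid d$ the local root number is the Legendre symbol $(-1/p)$, not a $\pm 1$ coming from a multiplicative type. The paper handles this in Proposition \ref{signprop} by observing that $d \equiv 1 \imod{4}$ forces an even number of prime factors of $d$ to be $\equiv 3 \imod{4}$, so these additive contributions cancel.

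Your treatment of the root number at the multiplicative primes is also too schematic to count as a proof. The congruence conditions on $d$ from Theorem \ref{resthm} do \emph{not} by themselves force $\om(E_{m,n}^{(d)}) = -1$; one must actually count the split primes of $E_{m,n}^{(d)}$. The paper does this via Proposition \ref{emnsplit}, which classifies each bad prime $\ell$ of $E_{m,n}$ as split or nonsplit: $\ell = 2$ and $\ell \mid m+24n$ are always split, while primes dividing the remaining factor are split iff $\ell \equiv 1 \imod 3$. Combined with the twist rules from \eqref{dtab2}, the split primes of $E_{m,n}^{(d)}$ turn out to be exactly the primes $\ell \equiv 2 \imod 3$ dividing either $m+24n$ or the other factor. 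The reason for restricting to $m \in \{1,7,13,19\}$ is that then $m + 24n \equiv 1 \imod{3}$ (and the other factor is always $\equiv 1 \imod{6}$), so each contributes an \emph{even} number of such primes, giving $\om(E_{m,n}^{(d)}) = -1$. Your assertion that ``the allowed values $m \in \{1,7,13,19\}$ are precisely the residues for which this product equals $-1$'' has the right shape, but this parity-of-primes-$\equiv 2 \imod 3$ mechanism is the actual content that needs to be supplied.
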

Assuming the Birch and Swinnerton-Dyer conjecture, this result proves using purely algebraic methods that
infinitely many elliptic curves satisfy $N_{1}(X) \gg X$. Furthermore, from our methods, we know precisely which quadratic twists
are of rank 1 and our elliptic curves are straightforward to construct since we only need \eqref{choice} to be squarefree.
This gives, albeit conditionally, another answer to Problem 9.33 of \cite{ono}.

Since we make extensive use of Selmer groups and Galois cohomological methods, it seems likely that our methods can be extended
to other number fields, provided that a similar Davenport-Heilbronn/Nakagawa-Horie result on the 3-rank of class groups of quadratic fields
exists over the given number field
to show the positive proportion of $d$. It would be interesting to see if our methods can be adapted to work
over $\Q(\sqrt{-1})$ or $\Q(\sqrt{5})$.

While Corollary \ref{rescor} assumes the Birch and Swinnerton-Dyer conjecture is true, we note that one can most likely
combine \cite[Theorem 2.10]{v97} and the $p$-adic Gross-Zagier formula of Perrin-Riou to show the result unconditionally.

This paper is organized as follows.
In Section \ref{infd}, we show that for certain $d$ satisfying a congruence condition depending on $E$, the quadratic twist $E^{(d)}$
has an almost everywhere unramified 3-Selmer group.
Carefully analyzing the 3-Selmer group along with further conditions on $d$ and
a consideration of the $3^{\infty}$-Selmer group
culminate in allowing us to determine the 3-Selmer group in Proposition \ref{3selrank} at the end of Section \ref{furthercond}.
We prove Theorem \ref{resthm} and Corollary \ref{rescor} in Section \ref{respfs}.
In Section \ref{example}, we give our explicit example $E_{m, n}$ and show that it satisfies all our desired conditions, thus proving Theorem \ref{exthm}.

It should be noted that
Section \ref{selmer} is similar to the work of \cite{wang} who uses Galois cohomological methods
to show that certain elliptic curves have a positive proportion of its quadratic twists with trivial 3-Selmer group (and hence rank 0). Wang mentions at the
end of his thesis that one could most likely use similar methods to prove an analogous result for quadratic twists with 3-Selmer group $\Z/3\Z$. It is this method
that we follow. Certain results remain the same as in \cite{wang}, however for completeness, we have included their proofs.

\section{Proof of Theorem \ref{resthm}}\label{selmer}
Throughout this section, we will make various assumptions to show that for certain elliptic curves, a positive proportion of its quadratic twists
have 3-Selmer group $\Z/3\Z$ and global root number $-1$.
The format of our assumptions will be as follows. Each assumption will contain two parts, part $(i)$ will be used for assumptions on our elliptic curve
$E$ and part $(ii)$ will be used for assumptions on $d$ once $E$ is fixed. We make our first assumption below.

\begin{asmp}\label{asmp1}
Assume that:
\begin{enumerate}[$(i)$]
\item $E/\Q$ is semistable, and
\item $d$ is squarefree, $d \equiv 1 \imod{4}$, and $(d, \Delta_{E}) = 1$.
\end{enumerate}
\end{asmp}
Recall that the discriminant of the quadratic twist $E^{(d)}$ is such that $\Delta_{E^{(d)}} = d^{6}\Delta_{E}$.
The assumption that $d \equiv 1 \imod{4}$ implies that $N_{E^{(d)}} = d^{2}N_{E}$. As $(d, \Delta_{E}) = 1$, $(d, N_{E}) = 1$ since the conductor divides the discriminant.
Recall that $E$ has good, multiplicative, or additive reduction at a prime $\ell$ if and only if $\ord_{\ell}(N_{E})$ is $0$, $1$, or $\geq 2$.
Since $(d, N_{E}) = 1$ and $N_{E^{(d)}} = d^{2}N_{E}$, $E^{(d)}$ has additive reduction at all primes dividing $d$, multiplicative reduction at all
primes dividing $N_{E}$, and good reduction at all other primes.

\subsection{Preliminaries}
We say $E$ and $E^{(d)}$ have the ``same multiplicative splitting type" at a prime $\ell$ when $E$ and $E^{(d)}$ either both have split multiplicative
reduction or both have nonsplit multiplicative reduction at $\ell$.
\begin{prop}[\cite{wang}, Proposition 3.1]\label{prop1}
Suppose $\ell \neq 3$ is a prime of multiplicative
reduction for $E$ (and hence also for $E^{(d)}$ by Assumption \ref{asmp1}).
Then $E/\Q_{\ell}$ and $E^{(d)}/\Q_{\ell}$ have the same multiplicative splitting type if and only if
$d$ is a square in $\Q_{\ell}$.
\end{prop}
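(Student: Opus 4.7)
My plan is to combine a local analysis of the extension $\Q_{\ell}(\sqrt{d})/\Q_{\ell}$ with the standard description of multiplicative reduction via Tate uniformization. Recall that any elliptic curve $E/\Q_{\ell}$ with multiplicative reduction is $\Q_{\ell}$-isomorphic either to a Tate curve $E_{q}$ (in the split case) or to the unramified quadratic twist of such an $E_{q}$ (in the nonsplit case). In particular, twisting $E/\Q_{\ell}$ by the unramified quadratic character of $\Q_{\ell}$ interchanges split and nonsplit multiplicative reduction, while twisting by the trivial character clearly preserves it. This will be the one nontrivial input.

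The first step is to check that under Assumption \ref{asmp1}, the local extension $\Q_{\ell}(\sqrt{d})/\Q_{\ell}$ is either trivial or the unramified quadratic extension of $\Q_{\ell}$. Since $\ell$ is a prime of multiplicative reduction we have $\ell \mid \Delta_{E}$, so the hypothesis $(d, \Delta_{E}) = 1$ forces $d \in \Z_{\ell}^{*}$. For odd $\ell$, a nonsquare unit in $\Z_{\ell}^{*}$ generates precisely the unramified quadratic extension. For $\ell = 2$, the hypothesis $d \equiv 1 \imod 4$ narrows $d$ modulo $8$ to $1$ or $5$; the first case is precisely $(\Z_{2}^{*})^{2}$, and in the second case $\Q_{2}(\sqrt{d}) = \Q_{2}(\sqrt{5})$, which is the unramified quadratic extension of $\Q_{2}$.

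With these two ingredients the biconditional follows. Since $E^{(d)}/\Q_{\ell}$ is the twist of $E/\Q_{\ell}$ by the quadratic character of $\gal(\ov{\Q_{\ell}}/\Q_{\ell})$ cutting out $\Q_{\ell}(\sqrt{d})$, if $d \in (\Q_{\ell}^{*})^{2}$ this character is trivial and $E \cong E^{(d)}$ over $\Q_{\ell}$, giving the same splitting type. Conversely, if $d \notin (\Q_{\ell}^{*})^{2}$, then by the previous paragraph the twisting character is the unramified quadratic character, so the first paragraph forces $E$ and $E^{(d)}$ to have opposite multiplicative splitting types. The main obstacle is really just the Tate-uniformization fact that the unramified quadratic twist swaps split and nonsplit reduction; once this is cited, the proof is a short case analysis of $d \bmod \ell$ (respectively $d \bmod 8$ when $\ell = 2$).
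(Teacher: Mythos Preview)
Your argument is correct and takes a genuinely different route from the paper. The paper does not invoke Tate uniformization at all. Instead, for the nontrivial direction it builds the map
\[
\phi \colon E(\Q_{\ell}) \oplus E^{(d)}(\Q_{\ell}) \longrightarrow E(\Q_{\ell}(\sqrt{d})), \qquad P \oplus Q \mapsto P + Q,
\]
shows by hand that $\ker\phi$ and $\cok\phi$ are finite $2$-groups, and then runs the same construction on the groups of nonsingular points over the residue fields. A cardinality count using $\#\wt{E}_{ns}(\F_{\ell^{2}}) = \ell^{2} \pm 1$ and $\#\wt{E}_{ns}(\F_{\ell}), \#\wt{E}_{ns}^{(d)}(\F_{\ell}) \in \{\ell-1, \ell+1\}$ then forces exactly one of $E$, $E^{(d)}$ to be split and the other nonsplit when $d$ is a nonsquare.

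Your approach is shorter and more conceptual: once one knows that a curve with multiplicative reduction is either a Tate curve or its unramified quadratic twist, and once Assumption~\ref{asmp1} pins the local character $\chi_{d}$ down to either trivial or unramified, the conclusion is immediate. The paper's longer argument, however, buys something extra: the statement that $\phi$ has $2$-group kernel and cokernel is recorded separately as Corollary~\ref{cor2} and is reused later (in the proof of Proposition~\ref{prop3}) to control $E^{(d)}(\Q_{\ell})/3E^{(d)}(\Q_{\ell})$ at additive and archimedean places. So while your proof of the proposition itself is perfectly fine, be aware that the paper's chosen method is doing double duty.
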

\begin{rem}
We give an algebraic proof below. A more explicit proof can be given
by using \cite[Proposition 4.4]{schmitt} which states that $\ell \neq 2, 3$ is split multiplicative if and only if
$-c_{4}c_{6}$ is a square mod $\ell$; $\ell = 3$ is split multiplicative if and only if $b_{2}$ is a square mod $\ell$;
and $\ell = 2$ is split multiplicative if and only if $x^{2} + a_{1}x + (a_{3}a_{1}^{-1} + a_{2})$ has a root in $\F_{2}$.
This more algebraic proof below however has an added advantage of giving Corollary \ref{cor2}.
\end{rem}
\begin{proof}
If $d$ is a square of $\Q_{\ell}$, then $\Q_{\ell}(\sqrt{d}) = \Q_{\ell}$ and hence $E$ and $E^{(d)}$ are isomorphic over $\Q_{\ell}$.
Then their reductions to $\F_{\ell}$ are isomorphic which implies that $E/\Q_{\ell}$ and $E^{(d)}/\Q_{\ell}$ have the same multiplicative splitting type.

Now suppose $d$ is not a square of $\Q_{\ell}$. As $E/\Q_{\ell}$ and $E^{(d)}/\Q_{\ell}$
are isomorphic over $\Q_{\ell}(\sqrt{d})$, we may identify the $\Q_{\ell}$-rational points of
$E$ and $E^{(d)}$, $E(\Q_{\ell})$ and $E^{(d)}(\Q_{\ell})$ respectively, as subgroups of $E(\Q_{\ell}(\sqrt{d}))$ fixed by the ordinary
Galois action $\varphi_{\sigma}: P \mapsto P^{\sigma}$, $\sigma \in \gal(\ov{\Q}/\Q)$, and the twisted Galois action $\psi_{\sigma}: P \mapsto [\chi_{d}(\sigma)] P^{\sigma}$ (where we have used the quadratic character
$\chi_{d}$ to define a 1-cocycle, see \cite[p. 321]{silverman1}). This gives a map
$\phi: E(\Q_{\ell}) \oplus E^{(d)}(\Q_{\ell}) \rightarrow E(\Q_{\ell}(\sqrt{d}))$ defined such that $P \oplus Q \mapsto P + Q$.
Therefore $\ker\phi = \{P \oplus (-P): P \in E(\Q_{\ell}) \cap E^{(d)}(\Q_{\ell})\}$. For $P \in \ker\phi$, $P$ is fixed by both the ordinary and twisted
Galois actions. As $P = \varphi_{\sigma}(P) = P^{\sigma}$ and $P = \psi_{\sigma}(P) = [\chi_{d}(\sigma)]P^{\sigma} = [\chi_{d}(\sigma)]P$, since there
exists an $\sigma$ with $\chi_{d}(\sigma) \neq 1$, we have $P = -P$. Therefore $\ker\phi \cong E(\Q_{\ell})[2]$.

Let $c$ be a generator $\gal(\Q_{\ell}(\sqrt{d})/\Q_{\ell})$. For any $P \in E(\Q_{\ell}(\sqrt{d}))$, we have $(P + P^{c})^{c} = P + P^{c}$
and $(P - P^{c})^{c} = [\chi_{d}(c)](P - P^{c})$. Indeed, writing $P = (x, y)$, we have $P^{c} = (x, \chi_{d}(c)y)$ and hence $-P^{c} = (x, -\chi_{d}(c)y)$.
Then as $\chi_{d}(c)^{2} = 1$, $(P + P^{c})^{c} = P^{c} + P$ and
\begin{align*}
(P - P^{c})^{c} &= P^{c} + (-P^{c})^{c} = (x, \chi_{d}(c)y) + (x, -y)\\& = [\chi_{d}(c)]((x, y) + (x, -\chi_{d}(c)y)) = [\chi_{d}(c)](P - P^{c}).
\end{align*}
This implies that $P + P^{c} \in E(\Q_{\ell})$ and $P - P^{c} \in E^{(d)}(\Q_{\ell})$. Therefore
\begin{align*}
2P = (P + P^{c}) + (P - P^{c}) = \phi((P + P^{c}) \oplus (P - P^{c})) \in \im\phi
\end{align*}
for every $P \in E(\Q_{\ell}(\sqrt{d}))$. Then $E(\Q_{\ell}(\sqrt{d}))/2E(\Q_{\ell}(\sqrt{d})) \twoheadrightarrow E(\Q_{\ell}(\sqrt{d}))/\im\phi$, that
is, $\cok\phi$ is a quotient group of $E(\Q_{\ell}(\sqrt{d}))/2E(\Q_{\ell}(\sqrt{d}))$. Thus $\cok\phi$ and $\ker\phi$ are finite 2-groups.

The same argument as above can be applied to $\wt{E}_{ns}(\F_{\ell})$ and $\wt{E}_{ns}^{(d)}(\F_{\ell})$, the group of nonsingular reduced points of $E(\Q_{\ell})$ and $E^{(d)}(\Q_{\ell})$.
Viewing these groups as fixed points on $\wt{E}_{ns}(\F_{\ell^{2}})$ (since $d$ is not a square in $\Q_{\ell}$, $\Q_{\ell}(\sqrt{d})$ reduces to $\F_{\ell^{2}}$)
under the ordinary and twisted Galois actions respectively, we have the short exact sequence
\begin{align*}
0 \longrightarrow \ker\wt{\phi} \longrightarrow \wt{E}_{ns}(\F_{\ell}) \oplus \wt{E}_{ns}^{(d)}(\F_{\ell}) \overset{\wt{\phi}}{\longrightarrow} \wt{E}_{ns}(\F_{\ell}(\sqrt{d})) \longrightarrow \cok\wt{\phi} \longrightarrow 0.
\end{align*}
Since all the groups above are finite, we have
\begin{align*}
\# \ker\wt{\phi} \cdot \# \wt{E}_{ns}(\F_{\ell}(\sqrt{d})) = \#(\wt{E}_{ns}(\F_{\ell}) \oplus \wt{E}_{ns}^{(d)}(\F_{\ell})) \cdot \#\cok\wt{\phi}.
\end{align*}
That is,
\begin{align*}
\# \wt{E}_{ns}(\F_{\ell}(\sqrt{d})) = \frac{\#\cok\wt{\phi}}{\#\ker\wt{\phi}} \cdot \# \wt{E}_{ns}(\F_{\ell}) \cdot \# \wt{E}_{ns}^{(d)}(\F_{\ell}).
\end{align*}
Note that $\#\cok\wt{\phi}/\#\ker\wt{\phi}$ is a nonnegative power of 2.
Since $\Q_{\ell}(\sqrt{d})/\Q_{\ell}$ is a finite extension, if $E$ has multiplicative reduction over $\Q_{\ell}$, then it also has
multiplicative reduction over $\Q_{\ell}(\sqrt{d})$.
Then $\# \wt{E}_{ns}(\F_{\ell}(\sqrt{d})) = \# \wt{E}_{ns}(\F_{\ell^{2}}) = \ell^{2} \pm 1$.
Note that both $\wt{E}_{ns}(\F_{\ell})$ and $\wt{E}_{ns}^{(d)}(\F_{\ell})$ have cardinality $\ell \pm 1$ (not necessarily both the same cardinality).
If $\ell = 2$, the cardinalities of $\wt{E}_{ns}(\F_{\ell}(\sqrt{d}))$,
$\wt{E}_{ns}(\F_{\ell})$, and $\wt{E}_{ns}^{(d)}(\F_{\ell})$ are all prime to 2 and hence $\#\cok\wt{\phi}/\#\ker\wt{\phi} = 1$. For $\ell \geq 5$,
$\ell^{2} - 1 > (\ell + 1)^{2}/2$ and $\ell^{2} + 1 < 2(\ell - 1)^{2}$ and hence $\#\cok\wt{\phi}/\#\ker\wt{\phi} = 1$. Therefore
we must have $$\# \wt{E}_{ns}(\F_{\ell}(\sqrt{d})) = \# \wt{E}_{ns}(\F_{\ell}) \cdot \# \wt{E}_{ns}^{(d)}(\F_{\ell})$$ which implies that
$\# \wt{E}_{ns}(\F_{\ell}(\sqrt{d})) = \ell^{2} - 1$ and one of $\# \wt{E}_{ns}(\F_{\ell})$ and $\wt{E}_{ns}^{(d)}(\F_{\ell})$ is
$\ell - 1$ and the other is $\ell + 1$. That is, $E/\Q_{\ell}(\sqrt{d})$ has split multiplicative reduction and
exactly one of $E/\Q_{\ell}$ and $E^{(d)}/\Q_{\ell}$ has split multiplicative reduction, and the other has nonsplit multiplicative reduction.
This completes the proof of Proposition \ref{prop1}.
\end{proof}

The proof of Proposition \ref{prop1} also yields the following result.
\begin{cor}\label{cor2}
Let $E/K$ be an elliptic curve and let $L/K$ be a quadratic extension and $E^{L}/K$ the quadratic twists of $E$ with
respect to $L$. Then viewing $E(K)$ and $E^{L}(K)$ as subgroups fixed by the ordinary and twisted Galois actions
of $\gal(L/K)$ on $E(L)$, the map $\phi: E(K) \oplus E^{L}(K) \rightarrow E(L)$ with $P \oplus Q \mapsto P + Q$
has the property that both its kernel and cokernel are finite 2-groups.
\end{cor}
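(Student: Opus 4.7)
The plan is to abstract the argument already carried out in the proof of Proposition \ref{prop1} into the general setting of an arbitrary quadratic extension $L/K$ (so that the reader sees that Proposition \ref{prop1} really used nothing specific about $\Q_{\ell}(\sqrt{d})/\Q_{\ell}$ in this step). Let $c$ denote the nontrivial element of $\gal(L/K)$ and let $\chi$ be the quadratic character cutting out $L$; under the embeddings into $E(L)$, the subgroup $E(K)$ consists of points fixed by the ordinary action $\varphi_{c} \colon P \mapsto P^{c}$ and $E^{L}(K)$ consists of points fixed by the twisted action $\psi_{c} \colon P \mapsto [\chi(c)]P^{c}$.

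First I would analyze $\ker\phi$. An element $P \oplus Q$ of the kernel satisfies $Q = -P$, with $P$ simultaneously fixed by $\varphi_{c}$ and $\psi_{c}$; subtracting one relation from the other yields $P = [\chi(c)]P = -P$, hence $2P = 0$. Therefore $\ker\phi$ injects into $E(L)[2]$, which is a finite $2$-group.

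Next I would compute $\cok\phi$ by producing an explicit preimage for $2P$ for every $P \in E(L)$, using the two identities verified in the proof of Proposition \ref{prop1}: $(P+P^{c})^{c} = P+P^{c}$ and $(P-P^{c})^{c} = [\chi(c)](P-P^{c})$. These identities (whose verification needs only that $c$ is an involution and that $\chi(c)^{2}=1$, so they carry over verbatim from $\Q_{\ell}$ to $K$) show $P+P^{c} \in E(K)$ and $P-P^{c} \in E^{L}(K)$, so
\begin{equation*}
2P = \phi\bigl((P+P^{c}) \oplus (P-P^{c})\bigr) \in \im\phi.
\end{equation*}
Thus $\cok\phi$ is a quotient of $E(L)/2E(L)$, which is a $2$-group, and in fact a finite one in the contexts of interest ($K$ a local or global field), either by local Mordell--Weil or by weak Mordell--Weil.

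The main point to be careful about is that although the original proof was written for $K = \Q_{\ell}$ and $L = \Q_{\ell}(\sqrt{d})$ with $d$ a non-square, the step extracting $\ker\phi$ and $\cok\phi$ as $2$-groups never used anything beyond $[L:K]=2$ and the general cocycle description of quadratic twisting; there is no obstacle, and writing out the proof is essentially a matter of replacing $\chi_{d}$ by the character of $L/K$ and $\Q_{\ell}(\sqrt{d})$ by $L$ throughout the relevant paragraphs.
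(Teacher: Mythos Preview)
Your proposal is correct and takes essentially the same approach as the paper: the paper's proof simply says to repeat the second and third paragraphs of the proof of Proposition~\ref{prop1} with $\Q_{\ell}$ replaced by $K$ and $\Q_{\ell}(\sqrt{d})$ by $L$, which is exactly what you have written out. Your explicit caveat that finiteness of $E(L)/2E(L)$ requires $K$ to be local or global is a fair observation (the paper leaves this implicit), and your observation that the identities $(P\pm P^{c})^{c}=\pm(P\pm P^{c})$ follow purely from $c^{2}=1$ is in fact cleaner than the paper's coordinate verification.
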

\begin{proof}
The proof is exactly as starting in the second paragraph of the proof of Proposition \ref{prop1} except we replace
$\Q_{\ell}$ with $K$ and $\Q_{\ell}(\sqrt{d})$ with $L$.
\end{proof}

In the next section, we will make use of the following general lemma which follows from the Snake Lemma.
\begin{lemma}[\cite{wang}, Lemma 3.4]\label{lem1}
If $\phi: A \rightarrow B$ has an $n$-divisible kernel and cokernel, and the cokernel has no $n$-torsion, then $A/nA \cong B/nB$.
\end{lemma}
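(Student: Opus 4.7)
The plan is to factor $\phi$ through its image and apply the Snake Lemma twice, once to each half. Concretely, I would write down the two short exact sequences
\begin{equation*}
0 \longrightarrow \ker\phi \longrightarrow A \longrightarrow \im\phi \longrightarrow 0, \qquad 0 \longrightarrow \im\phi \longrightarrow B \longrightarrow \cok\phi \longrightarrow 0,
\end{equation*}
and then form the commutative diagram in which the vertical maps on each row are multiplication by $n$.

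For the first sequence, the Snake Lemma produces an exact tail
\begin{equation*}
\ker\phi / n\ker\phi \longrightarrow A/nA \longrightarrow \im\phi / n\im\phi \longrightarrow 0.
\end{equation*}
The hypothesis that $\ker\phi$ is $n$-divisible kills the left term, so $A/nA \cong \im\phi / n\im\phi$. For the second sequence, the Snake Lemma produces
\begin{equation*}
\cok\phi[n] \longrightarrow \im\phi / n\im\phi \longrightarrow B/nB \longrightarrow \cok\phi / n\cok\phi \longrightarrow 0.
\end{equation*}
The hypothesis that $\cok\phi$ has no $n$-torsion kills the left term, and the hypothesis that $\cok\phi$ is $n$-divisible kills the right term, yielding $\im\phi / n\im\phi \cong B/nB$. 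Composing the two isomorphisms gives $A/nA \cong B/nB$.

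There is no real obstacle here; the only point worth noting is that each of the three hypotheses (divisibility of $\ker\phi$, divisibility of $\cok\phi$, and torsion-freeness of $\cok\phi$ in degree $n$) is used exactly to kill one of the error terms that would otherwise appear in the two Snake Lemma sequences. The $n$-torsion of $\im\phi$ and the $n$-torsion of $A$ and $B$ never need to be controlled because the two diagrams are stitched together through $\im\phi/n\im\phi$ rather than through $\im\phi[n]$.
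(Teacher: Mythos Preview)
Your proof is correct and is essentially identical to the paper's own argument: both factor $\phi$ through $\im\phi$, apply the Snake Lemma to the two resulting short exact sequences with vertical multiplication-by-$n$ maps, and use the three hypotheses to kill exactly the three obstruction terms. There is nothing to add.
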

\begin{proof}
We have the following commutative diagram where both rows are short exact sequences:
\begin{align*}
\xymatrix{
0 \ar[r] &\ker\phi \ar[r]\ar[d]^{\times n} &A \ar[r]^-{\phi}\ar[d]^{\times n} &\im\phi \ar[r]\ar[d]^{\times n} &0\\
0 \ar[r] &\ker\phi \ar[r] &A \ar[r]^-\phi &\im\phi \ar[r] &0
}
\end{align*}
By the Snake Lemma, since both rows are exact, we have the following exact sequence 
\begin{align*}
\xymatrix{
\cdots \ar[r] &(\im\phi)[n] \ar[r] &\ker\phi/n\ker\phi \ar[r] &A/nA \ar[r]^-{\phi} &\im\phi/n\im\phi \ar[r] &0.
}
\end{align*}
Since $\phi$ has an $n$-divisible kernel, $\ker\phi/n\ker\phi = 0$ and hence $A/nA \cong \im\phi/n\im\phi$.
Similarly considering $B$, we have
\begin{align*}
\xymatrix{
0 \ar[r] &\im\phi \ar[r]\ar[d]^{\times n} &B \ar[r]^-{\phi}\ar[d]^{\times n} &\cok\phi \ar[r]\ar[d]^{\times n} &0\\
0 \ar[r] &\im\phi \ar[r] &B \ar[r]^-{\phi} &\cok\phi \ar[r] &0
}
\end{align*}
which again applying the Snake Lemma yields that
\begin{align*}
\xymatrix{
\cdots \ar[r] &(\cok\phi)[n] \ar[r] &\im\phi/n\im\phi \ar[r] &B/nB \ar[r]^-{\phi} &\cok\phi/n\cok\phi \ar[r] &0.
}
\end{align*}
Since $\phi$ has an $n$-divisible cokernel and no $n$-torsion, $\cok\phi/n\cok\phi = 0$ and $(\cok\phi)[n] = 0$.
This implies that $\im\phi/n\im\phi \cong B/nB$ and hence $A/nA \cong B/nB$.
This completes the proof of Lemma \ref{lem1}.
\end{proof}

%

\subsection{Congruence conditions for $d$}\label{infd}
We now show that for infinitely many squarefree $d$, the twist $E^{(d)}$ of certain elliptic curves $E$ have an almost
everywhere unramified 3-Selmer group.
We will use the following notational convention. For $K$ a number field, $M$ a $\gal(\ov{K}/K)$-module, and $\Sigma$
a subset of all places of $K$, define $H^{1}(K, M; \Sigma)$ to be the elements of $H^{1}(K, M)$ unramified at all places
outside $\Sigma$.

\begin{asmp}\label{asmp2}
Assume that:
\begin{enumerate}[$(i)$]
\item For $E/\Q$, $\ord_{2}(\Delta_{E})$ is odd, 3 is a prime of good reduction, and at all places $v$ of bad reduction, $3 \nmid \ord_{v}(\Delta_{E})$.
\item For primes $\ell$ of multiplicative reduction for $E^{(d)}$ (that is for primes $\ell \mid N_{E}$),
depending on whether or not $\ell$ is of split or nonsplit multiplicative reduction for $E$
and whether $\ell \equiv 1$ or $2 \imod{3}$ we make the following choice of $d$:
\begin{align}\label{dtab}
\begin{array}{ccc|c}
\text{Hypotheses on $\ell$} &&& \text{Choice for $d$}\\\hline
\text{$E$ has split reduction at $\ell$} & \ell = 2 && d \not\equiv 1 \imod{8}\\
& \ell \neq 2& \ell \equiv 1 \imod{3} & d \not\equiv \square \imod{\ell}\\
& \ell \neq 2& \ell \equiv 2 \imod{3} & d \equiv \square \imod{\ell}\\
\text{$E$ has nonsplit reduction at $\ell$} & \ell = 2 && d \equiv 1 \imod{8}\\
& \ell \neq 2& \ell \equiv 1 \imod{3} & d \equiv \square \imod{\ell}\\
& \ell \neq 2& \ell \equiv 2 \imod{3} & d \not\equiv \square \imod{\ell}.
\end{array}
\end{align}
\end{enumerate}
\end{asmp}
\begin{rem}
Note that our choice of $d$ in the case of $\ell = 2$ is based on the fact that $p$ is a square in $\Q_{2}$ if and only if $p \equiv 1 \imod{8}$.
\end{rem}
\begin{lemma}\label{lem2}
Let $\Sigma$ be a finite set of places containing the archimedean places, finite places where $E^{(d)}$ has
bad reduction, and the prime 3. Then the 3-Selmer group $\sel_{3}(E^{(d)}/\Q) \subset H^{1}(\Q, E^{(d)}[3]; \Sigma)$.
\end{lemma}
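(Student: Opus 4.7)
The plan is to unwind the definition of the 3-Selmer group and show that the local Selmer conditions at primes outside $\Sigma$ automatically force cohomology classes to be unramified. Recall that by definition, $\sel_{3}(E^{(d)}/\Q)$ is the subgroup of $\xi \in H^{1}(\Q, E^{(d)}[3])$ such that for every place $v$ of $\Q$, the restriction $\res_{v}(\xi)$ lies in the image of the local Kummer map
\[
\kappa_{v} : E^{(d)}(\Q_{v})/3E^{(d)}(\Q_{v}) \longhookrightarrow H^{1}(\Q_{v}, E^{(d)}[3]).
\]
So what we must verify is that for every finite place $v \notin \Sigma$, any class in the image of $\kappa_{v}$ is unramified at $v$, i.e.\ dies when restricted to the inertia subgroup $I_{v} \subset \gal(\ov{\Q_{v}}/\Q_{v})$.

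For $v \notin \Sigma$, by hypothesis $v$ is a finite place, $E^{(d)}$ has good reduction at $v$, and $v \neq 3$. Thus the residue characteristic of $v$ is prime to $3$ and $E^{(d)}[3]$ is an unramified $\gal(\ov{\Q_{v}}/\Q_{v})$-module. The plan is to invoke the standard fact (see e.g.\ Silverman, \emph{Arithmetic of Elliptic Curves}, Chapters VIII and X) that under these hypotheses the image of $\kappa_{v}$ coincides with the unramified cohomology
\[
H^{1}_{nr}(\Q_{v}, E^{(d)}[3]) := \ker\bigl(H^{1}(\Q_{v}, E^{(d)}[3]) \longrightarrow H^{1}(I_{v}, E^{(d)}[3])\bigr).
\]
The usual argument combines two inputs: (a) the formal group $\wh{E}^{(d)}(\mathfrak{m}_{v})$ is $3$-divisible because $3$ is a unit in the residue field, so the Kummer sequence identifies $E^{(d)}(\Q_{v})/3$ with $\wt{E}^{(d)}(\F_{v})/3$; and (b) the inflation–restriction sequence together with $H^{1}(I_{v}, E^{(d)}[3])^{\mathrm{Frob}_{v}} = 0$ (by Lang's theorem, since $\wt{E}^{(d)}/\F_{v}$ is smooth) identifies this with $H^{1}_{nr}(\Q_{v}, E^{(d)}[3])$.

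Given this identification, the conclusion is immediate: for every $\xi \in \sel_{3}(E^{(d)}/\Q)$ and every $v \notin \Sigma$ we have $\res_{v}(\xi) \in \im \kappa_{v} = H^{1}_{nr}(\Q_{v}, E^{(d)}[3])$, so $\xi$ is unramified at $v$. Hence $\sel_{3}(E^{(d)}/\Q) \subset H^{1}(\Q, E^{(d)}[3]; \Sigma)$. There is no real obstacle here beyond citing the correct reference; the content is entirely the standard "local Selmer condition equals unramified" computation at primes of good reduction away from the residue characteristic of the coefficients.
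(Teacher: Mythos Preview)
Your proof is correct and amounts to unpacking the paper's one-line citation of \cite[Corollary X.4.4]{silverman1}: both rest on the standard fact that at finite primes of good reduction away from $3$ the local Kummer image coincides with the unramified subgroup of $H^{1}(\Q_{v}, E^{(d)}[3])$. The paper simply quotes the reference, whereas you have sketched the underlying argument (formal-group $3$-divisibility plus inflation--restriction), but the approach is the same.
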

\begin{proof}
This immediately follows from an application of \cite[Corollary X.4.4]{silverman1}.
\end{proof}
\begin{prop}\label{prop3}
Under Assumptions \ref{asmp1} and \ref{asmp2},
$\sel_{3}(E^{(d)}/\Q) \subset H^{1}(\Q, E^{(d)}[3]; \{3\}).$
\end{prop}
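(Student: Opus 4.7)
The plan is to apply Lemma \ref{lem2} and then verify that at each place $v \in \Sigma$ with $v \neq 3$, the local Selmer condition---that is, the image of the Kummer map $\delta_{v} : E^{(d)}(\Q_{v})/3E^{(d)}(\Q_{v}) \hookrightarrow H^{1}(\Q_{v}, E^{(d)}[3])$---lies in the unramified subgroup $H^{1}_{\mathrm{ur}}(\Q_{v}, E^{(d)}[3]) = \ker(H^{1}(\Q_{v}, E^{(d)}[3]) \to H^{1}(I_{v}, E^{(d)}[3]))$. Since Assumption \ref{asmp1} gives $N_{E^{(d)}} = d^{2} N_{E}$, the set $\Sigma \setminus \{3\}$ consists of the archimedean place, primes $\ell \mid d$ (at which $E^{(d)}$ has additive reduction), and primes $\ell \mid N_{E}$ (at which $E^{(d)}$ has multiplicative reduction).

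At the archimedean place, $H^{1}(\R, E^{(d)}[3]) = 0$ since $\gal(\C/\R)$ has order $2$ while $E^{(d)}[3]$ has odd order $9$. For a prime $\ell \mid d$, the condition $d \equiv 1 \imod{4}$ forces $\ell$ to be odd, and for $\ell \neq 3$ the twist $E^{(d)}$ has Kodaira type $I_{0}^{*}$ (becoming good over the ramified quadratic extension $\Q_{\ell}(\sqrt{d})$). The Tamagawa component group then has order dividing $4$, $|\wt{E}^{(d)}_{ns}(\F_{\ell})| = \ell$, and the formal group is pro-$\ell$; each factor is coprime to $3$, so $E^{(d)}(\Q_{\ell})/3E^{(d)}(\Q_{\ell}) = 0$ and $\im \delta_{\ell} = 0$ is trivially unramified.

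For a prime $\ell \mid N_{E}$, the equality $\ord_{\ell}(\Delta_{E^{(d)}}) = \ord_{\ell}(\Delta_{E})$ (from $(d, \Delta_{E}) = 1$) combined with Assumption \ref{asmp2}(i) gives $3 \nmid \ord_{\ell}(\Delta_{E^{(d)}})$. Hence the Tamagawa component group $\Phi^{(d)}(\F_{\ell})$ has order prime to $3$, and applying the snake lemma to $0 \to E^{(d)}_{0}(\Q_{\ell}) \to E^{(d)}(\Q_{\ell}) \to \Phi^{(d)}(\F_{\ell}) \to 0$ gives an isomorphism $E^{(d)}_{0}(\Q_{\ell})/3E^{(d)}_{0}(\Q_{\ell}) \cong E^{(d)}(\Q_{\ell})/3E^{(d)}(\Q_{\ell})$. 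So every local class in the Kummer image is represented by some $P \in E^{(d)}_{0}(\Q_{\ell})$. Now $\wt{E}^{(d)}_{ns}(\ov{\F_{\ell}}) \cong \ov{\F_{\ell}}^{\times}$ is $3$-divisible, and the formal group over $\Q_{\ell}^{\mathrm{ur}}$ is pro-$\ell$ with $\ell \neq 3$, so $E^{(d)}_{0}(\Q_{\ell}^{\mathrm{ur}})$ is $3$-divisible. Choose $Q \in E^{(d)}_{0}(\Q_{\ell}^{\mathrm{ur}})$ with $3Q = P$; the Kummer cocycle $\sigma \mapsto \sigma Q - Q$ vanishes on the inertia subgroup $I_{\ell}$, so $\delta_{\ell}(P) \in H^{1}_{\mathrm{ur}}(\Q_{\ell}, E^{(d)}[3])$.

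The main obstacle is the multiplicative-prime case, especially $\ell = 2$: Assumption \ref{asmp2}(i) (odd $\ord_{2}(\Delta_{E})$) together with the $\ell = 2$ rows of Assumption \ref{asmp2}(ii) force $E^{(d)}$ to be nonsplit multiplicative at $2$ with $|\wt{E}^{(d)}_{ns}(\F_{2})| = 3$, so $\im \delta_{2}$ is genuinely nonzero and one cannot conclude unramifiedness by mere vanishing. The lifting argument via $\Q_{\ell}^{\mathrm{ur}}$ handles this (and all multiplicative primes) uniformly, the key inputs being the $3$-divisibility of $\wt{E}^{(d)}_{ns}(\ov{\F_{\ell}})$ and of the formal group together with triviality of $\Phi^{(d)}(\F_{\ell})/3\Phi^{(d)}(\F_{\ell})$.
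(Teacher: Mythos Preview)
Your proof is correct, and it takes a genuinely different route from the paper's at several places. At the archimedean place the paper invokes Corollary~\ref{cor2} and Lemma~\ref{lem1} to show $E^{(d)}(\R)/3E^{(d)}(\R)=0$, whereas you observe directly that $H^{1}(\R,E^{(d)}[3])=0$ because $\#\gal(\C/\R)=2$ is coprime to $9$. At primes $\ell\mid d$ the paper again uses Corollary~\ref{cor2} to compare with $E(\Q_{\ell}(\sqrt{d}))$, while you read off the vanishing of $E^{(d)}(\Q_{\ell})/3E^{(d)}(\Q_{\ell})$ from the $I_{0}^{*}$ filtration. The most substantive difference is at multiplicative primes $\ell\mid N_{E}$: the paper separates $\ell=2$ from $\ell\neq 2$, using the table in Assumption~\ref{asmp2}(ii) together with Proposition~\ref{prop1} to force $[E^{(d)}(\Q_{\ell}):E^{(d)}_{1}(\Q_{\ell})]$ to be prime to $3$ when $\ell\neq 2$, so that $E^{(d)}(\Q_{\ell})/3E^{(d)}(\Q_{\ell})=0$ there, and then treating $\ell=2$ by a separate reduction argument (Lemmas~\ref{2unram1} and~\ref{2unram}). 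Your lifting-to-$\Q_{\ell}^{\mathrm{ur}}$ argument is uniform in $\ell$ and uses only that $3\nmid c_{\ell}(E^{(d)})$, which follows from Assumption~\ref{asmp2}(i) alone; in particular you never invoke Assumption~\ref{asmp2}(ii), so you have in fact proved the proposition under the weaker hypotheses of Assumption~\ref{asmp1} together with Assumption~\ref{asmp2}(i). What the paper's approach buys is an explicit determination that $E^{(d)}(\Q_{\ell})/3E^{(d)}(\Q_{\ell})=0$ for $\ell\mid N_{E}$, $\ell\neq 2$ (rather than just unramifiedness of its image), but for the purposes of this proposition your argument is cleaner and more general.
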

\begin{proof}
Lemma \ref{lem2} implies that it suffices to show that $\sel_{3}(E^{(d)}/\Q)$ is unramified at the archimedean place
and the places where $E^{(d)}$ has bad reduction.

We first consider the archimedean place. Corollary \ref{cor2} applied to $L = \C$ and $K = \R$ shows that the map
$\phi: E^{(d)}(\R) \oplus (E^{(d)})^{\C}(\R) \rightarrow E^{(d)}(\C)$ has finite 2-group kernel and cokernel.
Then Lemma \ref{lem1} implies that
$$E^{(d)}(\R)/3E^{(d)}(\R) \oplus (E^{(d)})^{\C}(\R)/3(E^{(d)})^{\C}(\R) \cong E^{(d)}(\C)/3E^{(d)}(\C).$$
Since $\C$ is algebraically closed, $E^{(d)}(\C)/3E^{(d)}(\C) = 0$. Then the above isomorphism implies that
$E^{(d)}(\R)/3E^{(d)}(\R) = 0$ and hence $\sel_{3}(E^{(d)}/\Q)$ is unramified at the archimedean place.

We now consider the places where $E^{(d)}$ has bad reduction. By Assumption \ref{asmp1}, $E^{(d)}$
has additive reduction at all primes $\ell$ dividing $d$ and multiplicative reduction at all primes dividing $N_{E}$.

Consider the primes of additive reduction for $E^{(d)}$, that is primes $\ell \mid d$.
By Assumption \ref{asmp1}, since $(d, \Delta_{E}) = 1$, $\ell$ is of good reduction for $E$.
Then $E/\Q_{\ell}(\sqrt{d})$ also has good reduction. By Corollary \ref{cor2} the map
$\phi: E(\Q_{\ell}) \oplus E^{(d)}(\Q_{\ell}) \rightarrow E(\Q_{\ell}(\sqrt{d}))$ has finite 2-group kernel and cokernel. An application
of Lemma \ref{lem1} yields that
\begin{align}\label{prop3eq1a}
E(\Q_{\ell})/3E(\Q_{\ell}) \oplus E^{(d)}(\Q_{\ell})/3E^{(d)}(\Q_{\ell}) \cong E(\Q_{\ell}(\sqrt{d}))/3E(\Q_{\ell}(\sqrt{d})).
\end{align}
Since $\ell$ is of good reduction for $E$, we have the exact sequence
\begin{align*}
\xymatrix{0 \ar[r] &E_{1}(\Q_{\ell}) \ar[r] &E(\Q_{\ell}) \ar[r] & \wt{E}(\F_{\ell}) \ar[r] & 0.}
\end{align*}
Since 3 is a prime of good reduction for $E$ and $E_{1}(\Q_{\ell})/3E_{1}(\Q_{\ell}) = \wh{E}(\mc{M})/3\wh{E}(\mc{M}) = 0$
where $\wh{E}(\mc{M})$ is the formal group associated with $E$ and $\mc{M}$ is the maximal ideal of $\Z_{\ell}$, by the exactness of the above sequence and
Lemma \ref{lem1}, we have $E(\Q_{\ell})/3E(\Q_{\ell}) \cong \wt{E}(\F_{\ell})/3\wt{E}(\F_{\ell}).$ We now repeat the above with $\Q_{\ell}$
replaced by $\Q_{\ell}(\sqrt{d})$. Since $\ell \mid d$, $\Q_{\ell}(\sqrt{d})$ is a totally ramified extension of $\Q_{\ell}$ and hence
the residue field of $\Q_{\ell}(\sqrt{d})$ is $\F_{\ell}$. Then by the same reasoning as above, we have the isomorphism
$E(\Q_{\ell}(\sqrt{d}))/3E(\Q_{\ell}(\sqrt{d})) \cong \wt{E}(\F_{\ell})/3\wt{E}(\F_{\ell})$ and hence
$$E(\Q_{\ell}(\sqrt{d}))/3E(\Q_{\ell}(\sqrt{d})) \cong E(\Q_{\ell})/3E(\Q_{\ell}).$$
Combining this with \eqref{prop3eq1a} yields that $E^{(d)}(\Q_{\ell})/3E^{(d)}(\Q_{\ell}) = 0$ and hence
$\sel_{3}(E^{(d)}/\Q)$ is unramified at all places $\ell \mid d$.

We now consider the primes of multiplicative reduction for $E^{(d)}$, that is primes $\ell \mid N_{E}$. Fix such an $\ell$.
We have the filtration $E_{1}^{(d)}(\Q_{\ell}) \subset E_{0}^{(d)}(\Q_{\ell}) \subset E^{(d)}(\Q_{\ell})$. Recall that we
also have $E_{1}^{(d)}(\Q_{\ell}) \cong \wh{E}^{(d)}(\mc{M})$. Since 3 is a unit in $\Z_{\ell}$, multiplication by 3 is invertible in $\wh{E}^{(d)}(\mc{M})$.
Therefore
\begin{align}\label{E13}
E_{1}^{(d)}(\Q_{\ell})/3E_{1}^{(d)}(\Q_{\ell}) = 0.
\end{align}
Recall that for primes of multiplicative reduction, we have 
\begin{align*}
\#\wt{E}_{ns}(\F_{\ell}) =
\begin{cases}
\ell - 1 & \text{ if } \ell \text{ is split multiplicative}\\
\ell + 1 & \text{ if } \ell \text{ is nonsplit multiplicative}.
\end{cases}
\end{align*}
Furthermore, suppose $\ell$ is a prime of nonsplit multiplicative reduction for an elliptic curve $E/\Q$. By \cite[pp. 366, 378]{silverman2}, we have
\begin{align}\label{nonsplit}
\# E(\Q_{\ell})/E_{0}(\Q_{\ell}) = \begin{cases}1 & \text{if $\ord_{\ell}(\Delta_{E})$ is odd,}\\ 2 & \text{if $\ord_{\ell}(\Delta_{E})$ is even.}\end{cases}
\end{align}
We first consider the case when $\ell = 2$. This case is resolved with the following two lemmas.
\begin{lemma}\label{2unram1}
$E^{(d)}(\Q_{2})/3E^{(d)}(\Q_{2}) = \Z/3\Z$.
\end{lemma}
\begin{proof}
By our choice of $d$ in \eqref{dtab}, 2 is always of nonsplit multiplicative reduction for $E^{(d)}$.
Since $d \equiv 1 \imod{4}$, $\ord_{2}(\Delta_{E^{(d)}}) = \ord_{2}(\Delta_{E})$. Applying \eqref{nonsplit} to
the elliptic curve $E^{(d)}$ yields that $E^{(d)}(\Q_{2})/E_{0}^{(d)}(\Q_{2}) = 0$
if $\ord_{2}(\Delta_{E})$ is odd and is $\Z/2\Z$ if $\ord_{2}(\Delta_{E})$ is even.
We have the following commutative diagram with rows which are short exact sequences
\begin{align*}
\xymatrix{
0 \ar[r] &E_{0}^{(d)}(\Q_{2}) \ar[r]\ar[d]^{f = \times 3} &E^{(d)}(\Q_{2}) \ar[r]\ar[d]^{g = \times 3} &E^{(d)}(\Q_{2})/E_{0}^{(d)}(\Q_{2}) \ar[r]\ar[d]^{h = \times 3} &0\\
0 \ar[r] &E_{0}^{(d)}(\Q_{2}) \ar[r] &E^{(d)}(\Q_{2}) \ar[r] &E^{(d)}(\Q_{2})/E_{0}^{(d)}(\Q_{2})\ar[r] &0
}
\end{align*}
where $f$, $g$, and $h$ are the multiplication-by-3 map. Applying the Snake Lemma to the above diagram yields an exact sequence
containing
\begin{align*}
\xymatrix{
\cdots \ar[r] & \ker h \ar[r] & \cok f \ar[r] & \cok g \ar[r] & \cok h \ar[r] & 0.
}
\end{align*}
As $\ker h = 0$, $\cok f = E_{0}^{(d)}(\Q_{2})/3E_{0}^{(d)}(\Q_{2})$, $\cok g = E^{(d)}(\Q_{2})/3E^{(d)}(\Q_{2})$ and $\cok h = 0$,
\begin{align}\label{isom}
E_{0}^{(d)}(\Q_{2})/3E_{0}^{(d)}(\Q_{2}) \cong E^{(d)}(\Q_{2})/3E^{(d)}(\Q_{2}).
\end{align}
Now consider the following commutative diagram.
\begin{align*}
\xymatrix{
0 \ar[r] &E_{1}^{(d)}(\Q_{2}) \ar[r]\ar[d]^{f' = \times 3} &E_{0}^{(d)}(\Q_{2}) \ar[r]\ar[d]^{g' = \times 3} &E_{0}^{(d)}(\Q_{2})/E_{1}^{(d)}(\Q_{2}) \ar[r]\ar[d]^{h' = \times 3} &0\\
0 \ar[r] &E_{1}^{(d)}(\Q_{2}) \ar[r] &E_{0}^{(d)}(\Q_{2}) \ar[r] &E_{0}^{(d)}(\Q_{2})/E_{1}^{(d)}(\Q_{2})\ar[r] &0
}
\end{align*}
Since 2 is of nonsplit multiplicative reduction for $E^{(d)}$, it follows that $\#\wt{E}^{(d)}_{ns}(\F_{2}) = 3$ and hence $E^{(d)}_{0}(\Q_{2})/E^{(d)}_{1}(\Q_{2}) = \Z/3\Z$.
This implies that $\cok h' = \Z/3\Z$. From the Snake Lemma, we have an exact sequence containing the terms
\begin{align*}
\xymatrix{
\cdots \ar[r] & \cok f' \ar[r] & \cok g' \ar[r] & \cok h' \ar[r] & 0.
}
\end{align*}
As $\cok f' = E_{1}^{(d)}(\Q_{2})/3E_{1}^{(d)}(\Q_{2}) = 0$ by \eqref{E13} and $\cok g' = E_{0}^{(d)}(\Q_{2})/3E_{0}^{(d)}(\Q_{2})$, it follows that
$E_{0}^{(d)}(\Q_{2})/3E_{0}^{(d)}(\Q_{2}) = \Z/3\Z$. By \eqref{isom}, we have $E^{(d)}(\Q_{2})/3E^{(d)}(\Q_{2}) = \Z/3\Z.$
This completes the proof of Lemma \ref{2unram1}.
\end{proof}

\begin{lemma}\label{2unram}
The classes corresponding to $E^{(d)}(\Q_{2})/3E^{(d)}(\Q_{2})$ in $\sel_{3}(E^{(d)}/\Q)$ are unramified.
\end{lemma}
\begin{proof}
Fix a $P \in E^{(d)}(\Q_{2})$. Then there exists a $Q \in E^{(d)}(\ov{\Q}_{2})$ such that $3Q = P$. We want to show that the class
$\xi_{\sigma} = \{Q^{\sigma} - Q\}$ with $\sigma \in \gal(\ov{\Q}_{2}/\Q_{2})$ corresponding to $P$ in $\sel_{3}(E^{(d)}/\Q)$
is unramified, that is, $\xi_{\sigma} = 0$ when restricted to $H^{1}(I_{2}, E^{(d)}[3])$ where $I_{2} \subset \gal(\ov{\Q}_{2}/\Q_{2})$
is the inertia group at 2. For $\sigma \in I_{2}$, since an element of inertia acts trivially on $\wt{E}_{ns}^{(d)}(\F_{2})$, we have
$\wt{Q^{\sigma} - Q} = (\wt{Q})^{\sigma} - \wt{Q} = \wt{\mc{O}}.$
This implies that $Q^{\sigma} - Q \in E^{(d)}_{1}(\ov{\Q}_{2})$ which is a pro-2 group. Since we also have $Q^{\sigma} - Q \in E^{(d)}[3]$,
we must have $Q^{\sigma} - Q = 0$ since a pro-2 group cannot contain a nontrivial element with 3-torsion.
This completes the proof of Lemma \ref{2unram}.
\end{proof}

We recall the following group theoretic result.
\begin{lemma}\label{gplem}
Let $G$ be an abelian group and $H$ a subgroup of $G$ with $H/nH = 0$ for some $n$. If $(n, [G:H]) = 1$,
then $G/nG = 0$.
\end{lemma}
\begin{proof}
We write the group multiplicatively. Since $(n, [G:H]) = 1$, there exist integers $r$ and $s$
such that $rn + s[G:H] = 1$. Fix a $g \in G$. Since $g = g_{1}h$ for some representative $g_{1}$ of $G/H$
and $h \in H$, $g^{[G:H]} = g_{1}^{[G:H]}h^{[G:H]} = h^{[G:H]} \in H$ where the last equality is by
Lagrange's Theorem. Since every element of $H$ is an $n$th power, in particular so is $g^{[G:H]}$, and hence
$g = g^{rn + s[G:H]}$ is also an $n$th power. Writing the group additively, this implies that $G/nG = 0$.
This completes the proof of Lemma \ref{gplem}.
\end{proof}

Now suppose $\ell \mid N_{E}$ and $\ell \neq 2$.
By Lemma \ref{gplem} and \eqref{E13}, if we can show that $[E^{(d)}(\Q_{\ell}) : E_{1}^{(d)}(\Q_{\ell})]$
is prime to 3, then $E^{(d)}(\Q_{\ell})/3E^{(d)}(\Q_{\ell}) = 0$ and hence $\sel_{3}(E^{(d)}/\Q)$ is unramified
at all primes $\ell \mid N_{E}$, $\ell \neq 2$. We write
\begin{align*}
[E^{(d)}(\Q_{\ell}) : E_{1}^{(d)}(\Q_{\ell})] = [E^{(d)}(\Q_{\ell}) : E_{0}^{(d)}(\Q_{\ell})][E_{0}^{(d)}(\Q_{\ell}): E_{1}^{(d)}(\Q_{\ell})].
\end{align*}
To show that the left hand side is prime to 3, we show that both terms on the right hand side are prime to 3.

We first consider the case when $\ell$ is split for $E$. We have three subcases.

Suppose $\#\wt{E}_{ns}(\F_{\ell}) \equiv 0 \imod{3}$.
Then $\ell - 1 = \#\wt{E}_{ns}(\F_{\ell}) \equiv 0 \imod{3}$ and hence
$\ell \equiv 1 \imod{3}$. By our choice of $d$ in \eqref{dtab}, $d$ is not a square mod $\ell$ and
hence is not a square in $\Q_{\ell}$. Therefore by Proposition \ref{prop1}, $E^{(d)}/\Q_{\ell}$
has $\ell$ of nonsplit multiplicative reduction and hence $\#\wt{E}^{(d)}_{ns}(\F_{\ell}) = \ell + 1 \equiv 2 \imod{3}$.
As we have the exact sequence
\begin{align*}
\xymatrix{
0 \ar[r] & E_{1}^{(d)}(\Q_{\ell}) \ar[r] & E_{0}^{(d)}(\Q_{\ell}) \ar[r] & \wt{E}_{ns}^{(d)}(\F_{\ell}) \ar[r] & 0
}
\end{align*}
we have $[E_{0}^{(d)}(\Q_{\ell}) : E_{1}^{(d)}(\Q_{\ell})] = \#\wt{E}_{ns}^{(d)}(\F_{\ell}) \equiv 2 \imod{3}.$
As $\ell$ is of nonsplit multiplicative reduction for $E^{(d)}$, by \eqref{nonsplit}, we have $[E^{(d)}(\Q_{\ell}) : E_{0}^{(d)}(\Q_{\ell})] = 1$ or
$2$ depending on the parity of $\ord_{\ell}(\Delta_{E^{(d)}}) = \ord_{\ell}(\Delta_{E})$. Then it follows that
$[E^{(d)}(\Q_{\ell}) : E_{1}^{(d)}(\Q_{\ell})]$ is prime to 3 in this subcase.

Next, suppose $\#\wt{E}_{ns}(\F_{\ell}) \equiv 1 \imod{3}$.
Then $\ell -1 = \#\wt{E}_{ns}(\F_{\ell}) \equiv 1 \imod{3}$ and hence $\ell \equiv 2 \imod{3}$. By our choice of $d$ in \eqref{dtab},
$d$ is a square mod $\ell$ and hence is a square in $\Q_{\ell}$. Therefore by Proposition \ref{prop1}, $E^{(d)}/\Q_{\ell}$
has $\ell$ of split multiplicative reduction and hence
$[E_{0}^{(d)}(\Q_{\ell}) : E_{1}^{(d)}(\Q_{\ell})] = \#\wt{E}^{(d)}_{ns}(\F_{\ell}) = \ell - 1 \equiv 1 \imod{3}.$
As
$[E^{(d)}(\Q_{\ell}) : E_{0}^{(d)}(\Q_{\ell})] = \ord_{\ell}(\Delta_{E^{(d)}}) = \ord_{\ell}(\Delta_{E}),$
which is relatively prime to 3
by Assumption \ref{asmp2}, it follows that $[E^{(d)}(\Q_{\ell}) : E_{1}^{(d)}(\Q_{\ell})]$ is relatively prime to 3 in this subcase.

Finally, suppose $\#\wt{E}_{ns}(\F_{\ell}) \equiv 2 \imod{3}$.
Then $\ell - 1 = \#\wt{E}_{ns}(\F_{\ell}) \equiv 2\imod{3}$ which implies that $\ell \equiv 0 \imod{3}$ which is impossible since $\ell$ is prime.
Therefore this subcase can't happen.

Now consider the case when $\ell$ is nonsplit for $E$. We again have three subcases.

Suppose $\#\wt{E}_{ns}(\F_{\ell}) \equiv 0 \imod{3}$. Since $\ell$ is nonsplit,
$\ell + 1 = \#\wt{E}_{ns}(\F_{\ell}) \equiv 0 \imod{3}$ and hence $\ell \equiv 2 \imod{3}$.
By our choice of $d$ in \eqref{dtab}, $d$ is not a square mod $\ell$ and hence is not a square in $\Q_{\ell}$.
Therefore by Proposition \ref{prop1}, $E^{(d)}/\Q_{\ell}$ has $\ell$ of split multiplicative reduction
and hence $[E_{0}^{(d)}(\Q_{\ell}) : E_{1}^{(d)}(\Q_{\ell})] = \#\wt{E}^{(d)}_{ns}(\F_{\ell}) = \ell - 1 \equiv 1 \imod{3}.$
Since $[E^{(d)}(\Q_{\ell}): E_{0}^{(d)}(\Q_{\ell})] = \ord_{\ell}(\Delta_{E^{(d)}}) = \ord_{\ell}(\Delta_{E})$ is relatively
prime to 3, it follows that $[E^{(d)}(\Q_{\ell}) : E_{1}^{(d)}(\Q_{\ell})]$ is relatively prime to 3 in this subcase.

Suppose $\#\wt{E}_{ns}(\F_{\ell}) \equiv 1 \imod{3}$. Since $\ell$ is nonsplit,
$\ell + 1 = \#\wt{E}_{ns}(\F_{\ell}) \equiv 1 \imod{3}$ and hence $\ell \equiv 0 \imod{3}$.
This is impossible since $\ell$ is prime. Therefore this subcase can't happen.

Finally, suppose $\#\wt{E}_{ns}(\F_{\ell}) \equiv 2 \imod{3}$. Since $\ell$ is nonsplit,
$\ell + 1 = \#\wt{E}_{ns}(\F_{\ell}) \equiv 2 \imod{3}$ and hence $\ell \equiv 1 \imod{3}$.
By our choice of $d$ in \eqref{dtab}, $d$ is a square mod $\ell$ and hence is a square in $\Q_{\ell}$.
Therefore by Proposition \ref{prop1}, $E^{(d)}/\Q_{\ell}$ has $\ell$ of nonsplit multiplicative reduction
and hence $[E_{0}^{(d)}(\Q_{\ell}) : E_{1}^{(d)}(\Q_{\ell})] = \#\wt{E}^{(d)}_{ns}(\F_{\ell}) = \ell + 1 \equiv 2 \imod{3}.$
Since by \eqref{nonsplit} we have  $[E^{(d)}(\Q_{\ell}) : E_{0}^{(d)}(\Q_{\ell})]$ relatively prime to 3, it follows
that $[E^{(d)}(\Q_{\ell}) : E_{1}^{(d)}(\Q_{\ell})]$ is relatively prime to 3 in this subcase.

Therefore for $\ell \mid N_{E}$ with $\ell \neq 2$, we have shown $[E^{(d)}(\Q_{\ell}) : E_{1}^{(d)}(\Q_{\ell})]$ is relatively prime to 3 in all cases which
by Lemma \ref{gplem} and \eqref{E13} implies that
$E^{(d)}(\Q_{\ell})/3E^{(d)}(\Q_{\ell}) = 0$. This implies that $\sel_{3}(E^{(d)}/\Q)$ is
unramified at all places $\ell \mid N_{E}$.

Thus, for all squarefree $d$ satisfying the system of congruences specified by our choice of $d$ in $\eqref{dtab}$ (the system is finite since
we only have congruences for each $\ell \mid N_{E}$), $\sel_{3}(E^{(d)}/\Q)$ is unramified at all places away from 3.
That is, $\sel_{3}(E^{(d)}/\Q) \subset H^{1}(\Q, E^{(d)}[3]; \{3\}).$
This completes the proof of Proposition \ref{prop3}.
\end{proof}

\subsection{Decomposing the 3-Selmer Group}\label{3sel}
To further study the Selmer group, we make an assumption on $E/\Q$.
\begin{asmp}\label{asmp3}
Assume that:
\begin{enumerate}[$(i)$]
\item $E/\Q$ is an elliptic curve such that $E = E'/\langle P \rangle$ where $E'/\Q$ is an elliptic curve
with a rational 3-torsion point $P$.
\end{enumerate}
\end{asmp}

The above assumption yields a degree 3 isogeny $\phi: E' \rightarrow E$ given by the mod $P$ map.
Then there exists an $\F_{3}$ basis $\{P, Q\}$ of $E'[3]$ such that the action of $\sigma \in \gal(\ov{\Q}/\Q)$ on $E'[3]$ is of the form
$\smat{1}{\alpha}{0}{\om(\sigma)}$ where $\om$ is the mod 3 cyclotomic character. As $\ker\phi = \langle P\rangle$,
$\phi(Q) \neq 0$. Let $P' \in E[3]$ be such that $\{\phi(Q), P'\}$ is an $\F_{3}$ basis of $E[3]$.
Then the action of $\sigma \in \gal(\ov{\Q}/\Q)$ on $E[3]$ is of the form $\smat{\om(\sigma)}{\ast}{0}{1}$ since
for $\sigma \in \gal(\ov{\Q}/\Q)$, $\sigma(\phi(Q)) = \alpha\, \phi(P) + \om(\sigma)\phi(Q) = \om(\sigma)\phi(Q)$
in $E$. Since the Galois group $\gal(\ov{\Q}/\Q)$ acts on $E^{(d)}/\Q$ by the quadratic character $\chi_{d}$,
the matrix representing the action of $\sigma$ on $E^{(d)}[3]$ in the basis $\{\phi(Q)_{d}, P'_{d}\}$ (where $\phi(Q)_{d}$ and $P'_{d}$
are the points corresponding to $\phi(Q)$ and $P'$ in the quadratic twist) is
$\smat{\chi_{d}(\sigma)\om(\sigma)}{\ast}{0}{\chi_{d}(\sigma)}$.
Thus we have the short exact sequence of Galois modules
\begin{align}\label{cohomseq}
\xymatrix{0 \ar[r] & \F_{3}(\chi_{d}\om) \ar[r] & E^{(d)}[3] \ar[r] & \F_{3}(\chi_{d}) \ar[r] & 0}
\end{align}
where $\F_{3}(\chi_{d})$ and $\F_{3}(\chi_{d}\om)$ denote the module $\F_{3}$ endowed with the Galois actions
through the characters $\chi_{d}$ and $\chi_{d}\om$, respectively.
Taking cohomologies, we get a long exact sequence which include the terms
\begin{align*}
\xymatrix{\cdots \ar[r] &H^{1}(\Q, \F_{3}(\chi_{d}\om)) \ar[r] &H^{1}(\Q, E^{(d)}[3]) \ar[r] &H^{1}(\Q, \F_{3}(\chi_{d})) \ar[r] & \cdots.}
\end{align*}
Similarly, denoting $P_{d}$ and $Q_{d}$ the points corresponding to $P$ and $Q$ of the basis for $E'[3]$ in the quadratic twist
$E'^{(d)}$, we have that the matrix representing the action by $\sigma$ on $E'^{(d)}[3]$ is of the form $\smat{\chi_{d}(\sigma)}{\ast}{0}{\chi_{d}(\sigma)\om(\sigma)}.$
This yields the short exact sequence of Galois modules similar to \eqref{cohomseq},
\begin{align*}
\xymatrix{0 \ar[r] & \F_{3}(\chi_{d}) \ar[r] & E'^{(d)}[3] \ar[r] & \F_{3}(\chi_{d}\om) \ar[r] & 0}
\end{align*}
which gives the long exact sequence
\begin{align}\label{cohome1}
\xymatrix{\cdots \ar[r] &H^{1}(\Q, \F_{3}(\chi_{d})) \ar[r] &H^{1}(\Q, E'^{(d)}[3]) \ar[r] &H^{1}(\Q, \F_{3}(\chi_{d}\om)) \ar[r] & \cdots.}
\end{align}
By Proposition \ref{prop3}, assuming Assumptions \ref{asmp1} and \ref{asmp2}, $\sel_{3}(E^{(d)}/\Q) \subset H^{1}(\Q, E^{(d)}[3]; \{3\})$.
Let $$\phi: \sel_{3}(E^{(d)}/\Q) \rightarrow H^{1}(\Q, \F_{3}(\chi_{d}))$$ be the restriction of the map
$H^{1}(\Q, E^{(d)}[3]) \rightarrow H^{1}(\Q, \F_{3}(\chi_{d}))$ to the 3-Selmer group. Then to study the 3-Selmer group,
it suffices to study $\ker\phi$ and $\im\phi$.
\begin{asmp}\label{asmp4a}
Assume that:
\begin{enumerate}[$(ii)$]
\item $d \equiv 1 \imod{3}$.
\end{enumerate}
\end{asmp}
\begin{prop}\label{prop4}
Assume Assumptions \ref{asmp1}--\ref{asmp4a}.
Then $\ker\phi \subset H^{1}(\Q, \F_{3}(\chi_{d}\om); \{3\})$ and $\im\phi \subset H^{1}(\Q, \F_{3}(\chi_{d}); \varnothing)$.
\end{prop}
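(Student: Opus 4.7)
\emph{Proof plan.} The plan is to exploit the short exact sequence of Galois modules $0 \to \F_{3}(\chi_{d}\om) \to E^{(d)}[3] \to \F_{3}(\chi_{d}) \to 0$ via its long exact sequence in cohomology. Because we may assume $d > 1$, the character $\chi_{d}$ is nontrivial, so $H^{0}(\Q, \F_{3}(\chi_{d})) = 0$ and the map $H^{1}(\Q, \F_{3}(\chi_{d}\om)) \hookrightarrow H^{1}(\Q, E^{(d)}[3])$ is injective. Each $\xi \in \ker\phi$ then has a unique lift $\wt\xi \in H^{1}(\Q, \F_{3}(\chi_{d}\om))$, and the proposition reduces to showing (a) that every such $\wt\xi$ is unramified at every $v \neq 3$ and (b) that $\phi(\xi)$ is unramified at every place, including $3$.

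For (a), I would work place by place via the long exact sequence on $I_{v}$-invariants
\begin{align*}
H^{0}(I_{v}, E^{(d)}[3]) \to H^{0}(I_{v}, \F_{3}(\chi_{d})) \to H^{1}(I_{v}, \F_{3}(\chi_{d}\om)) \to H^{1}(I_{v}, E^{(d)}[3]).
\end{align*}
Whenever the leftmost arrow is surjective, the rightmost arrow is injective, and since $\res_{I_{v}}(\xi) = 0$ by Proposition~\ref{prop3}, $\res_{I_{v}}(\wt\xi)$ must vanish. The cases to check are: (i) good reduction for $E^{(d)}$ with $v \nmid d$, where both modules are unramified and the quotient $E^{(d)}[3] \twoheadrightarrow \F_{3}(\chi_{d})$ is automatically surjective on $I_{v}$-invariants; (ii) places $v \mid d$, where $\chi_{d}$ is ramified so $H^{0}(I_{v}, \F_{3}(\chi_{d})) = 0$ makes surjectivity vacuous; (iii) the archimedean place, where for $d > 0$ the character $\chi_{d}\om$ is nontrivial on $G_{\R}$ and $H^{1}(G_{\R}, \F_{3}(\chi_{d}\om)) = 0$ since $|G_{\R}|$ and $|\F_{3}|$ are coprime; and (iv) $v \mid N_{E}$. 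For case (iv) I would combine Assumption~\ref{asmp2}(i) (which via $3 \nmid \ord_{v}(\Delta_{E})$ forces $E^{(d)}[3]^{I_{v}}$ to be one-dimensional) with Proposition~\ref{prop1} and the choice of $d$ from Assumption~\ref{asmp2}(ii) to determine whether the Tate submodule $E^{(d)}[3]^{I_{v}}$ agrees with the isogeny submodule $\F_{3}(\chi_{d}\om)$. When $E$ is nonsplit at $v$, the two lines differ and surjectivity is immediate; when $E$ is split the lines coincide, and one must additionally invoke the stronger local vanishing $E^{(d)}(\Q_{v})/3E^{(d)}(\Q_{v}) = 0$ from the proof of Proposition~\ref{prop3} and explicitly track the connecting map $\delta_{v}\colon H^{0}(\Q_{v}, \F_{3}(\chi_{d})) \to H^{1}(\Q_{v}, \F_{3}(\chi_{d}\om))$ coming from the Tate period $q$ to force $\res_{v}(\wt\xi)$ into the unramified classes.

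For (b), at any $v \neq 3$ the unramifiedness of $\res_{v}(\xi)$ (Proposition~\ref{prop3}) is carried along functorially by the quotient $E^{(d)}[3] \twoheadrightarrow \F_{3}(\chi_{d})$, so $\res_{v}(\phi(\xi))$ is unramified. The only remaining place is $v = 3$: here I would invoke Assumption~\ref{asmp4a} ($d \equiv 1 \imod{3}$) to see that $\chi_{d}$ is trivial on $G_{\Q_{3}}$, so $\F_{3}(\chi_{d})|_{G_{3}} = \F_{3}$. Using the good reduction of $E^{(d)}$ at $3$ (Assumption~\ref{asmp2}(i)) and the local Selmer condition, which forces $\res_{3}(\xi)$ into the image of the Kummer map from $E^{(d)}(\Q_{3})/3E^{(d)}(\Q_{3})$, I would push this image through to $H^{1}(\Q_{3}, \F_{3})$ and verify that it lands in the unramified line, completing the argument.

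The main obstacle will be case (iv) in step (a): at multiplicative reduction primes the Tate and isogeny filtrations of $E^{(d)}[3]$ need not agree, and Assumption~\ref{asmp2}(ii) is precisely the bookkeeping that makes each subcase (split versus nonsplit, and $\ell \equiv 1$ versus $\ell \equiv 2 \imod{3}$) work out. The split subcase is the most delicate, since the two relevant one-dimensional lines then coincide and a plain inertia-invariants argument does not suffice; resolving it requires the stronger local vanishing from Proposition~\ref{prop3} together with an explicit computation using the Tate period $q$ to confirm that the image of the connecting map lies in the unramified part of $H^{1}(\Q_{v}, \F_{3}(\chi_{d}\om))$.
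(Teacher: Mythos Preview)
Your approach diverges from the paper's on both halves, and there is a genuine gap in your case (iv).

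\textbf{On $\im\phi$ at $v=3$.} The paper does not argue directly with the Kummer map for $E^{(d)}$. Instead it passes to the $3$-isogenous curve $E'^{(d)}$: the quotient $E^{(d)}[3]\twoheadrightarrow\F_3(\chi_d)$ factors through $E^{(d)}[3]\to E'^{(d)}[3]$, identifying $\F_3(\chi_d)$ with the line $\langle P_d\rangle\subset E'^{(d)}[3]$. A class in $\im\phi$ therefore lands in $\ker\psi\subset H^1(\Q,\F_3(\chi_d))\subset H^1(\Q,E'^{(d)}[3])$, and the paper shows $\ker\psi$ is unramified at $3$ by observing that any cocycle valued in $\{0,\pm P_d\}$ and coming from a Kummer element of $E'^{(d)}(\Q_3)$ must vanish on $I_3$, because $P_d$ does \emph{not} reduce to the identity (Nagell--Lutz, using $d\equiv 1\pmod 3$ so that $P_d$ and $P$ have the same reduction). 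Your plan to push the Kummer image of $E^{(d)}(\Q_3)/3$ into $H^1(\Q_3,\F_3)$ and ``verify that it lands in the unramified line'' is plausible but, as written, is only a hope; the paper's detour through $E'^{(d)}$ is exactly the concrete mechanism that makes this verification work.

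\textbf{On $\ker\phi$ at $v\mid N_E$.} Your dichotomy ``nonsplit $\Rightarrow$ the two lines differ, split $\Rightarrow$ they coincide'' is wrong at the inertial level, which is the level your long exact sequence on $I_v$-invariants lives at. The nonsplit twist is by the \emph{unramified} quadratic character, so $I_v$ does not see it; meanwhile $\chi_d$ and $\om$ are both unramified at $v\nmid 3d$, so the isogeny line $\F_3(\chi_d\om)$ is always $I_v$-fixed. Since Assumption~\ref{asmp2}(i) forces $E^{(d)}[3]^{I_v}$ to be one-dimensional, the isogeny line and the Tate line \emph{always} coincide as subspaces of $E^{(d)}[3]$, regardless of splitting type. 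Thus $H^0(I_v,E^{(d)}[3])\to H^0(I_v,\F_3(\chi_d))$ is the zero map in every subcase of (iv), and your ``surjectivity is immediate'' branch is vacuous. Your fallback (invoke $E^{(d)}(\Q_v)/3=0$ and compute $\delta_v$ via the Tate period $q$) then applies to all of case (iv), but it does not give what you want: $\delta_v(1)$ is the class of $q$ in $H^1(\Q_v,\F_3(\chi_d\om))$, and since $3\nmid\ord_v(q)$ this class is ramified --- indeed for $\ell\equiv 2\pmod 3$ with $d$ a square the unramified subgroup of $H^1(\Q_v,\mu_3)$ is zero while $[q]\ne 0$. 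So the computation shows only that $\res_v(\tilde\xi)$ lies in a line of ramified classes, not that it is unramified. The paper, for its part, asserts $\ker\phi\subset H^1(\Q,\F_3(\chi_d\om);\{3\})$ in one line from Proposition~\ref{prop3} and the exact sequence, without isolating this difficulty; so at these primes you have correctly identified a subtle point, but your proposed resolution does not close it.
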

\begin{proof}
Applying Proposition \ref{prop3} yields that $\sel_{3}(E^{(d)}/\Q) \subset H^{1}(\Q, E^{(d)}[3]; \{3\})$. From \eqref{cohomseq} we have
$\ker\phi \subset H^{1}(\Q, \F_{3}(\chi_{d}\om); \{3\})$ and $\im\phi \subset H^{1}(\Q, \F_{3}(\chi_{d}); \{3\})$. Thus it remains
to show that $\im\phi$ is unramified at the prime 3.

Let $$\psi: \sel_{3}(E'^{(d)}/\Q) \rightarrow H^{1}(\Q, \F_{3}(\chi_{d}\om))$$
be the restriction of the map $H^{1}(\Q, E'^{(d)}[3]) \rightarrow H^{1}(\Q, \F_{3}(\chi_{d}\om))$ to $\sel_{3}(E'^{(d)}/\Q)$.
We reduce the problem to considering $E'/\Q$. The mod $\langle \phi(Q)\rangle$ map gives
$E/\gen{\phi(Q)} \cong E'$ and similarly the mod $\gen{\phi(Q)_{d}}$ map
gives $E^{(d)}/\gen{\phi(Q)_{d}} \cong E'^{(d)}$. Note that $\F_{3}(\chi_{d})$ factors through the mod $\gen{\phi(Q)_{d}}$ map,
that is,
\begin{align*}
\xymatrix{
E^{(d)}(\Q_{3})[3] \ar[d] \ar[dr]^{\slash\gen{\phi(Q)_{d}}} &\\
\F_{3}(\chi_{d}) \ar[r] & E'^{(d)}(\Q_{3})[3]
}
\end{align*}
Thus to show that $\im\phi$ is unramified at the prime 3, it suffices to show that $\ker\psi$ is unramified at the prime 3 (since
we have the exact sequence \eqref{cohome1}).

From \eqref{cohome1}, we know that $\ker\psi$ is equal to the intersection of $\sel_{3}(E'^{(d)}/\Q)$ and the image
of $H^{1}(\Q, \F_{3}(\chi_{d}))$. From the definition of $\sel_{3}(E'^{(d)}/\Q)$, any element $\xi$ when restricted to $\Q_{3}$
is of the form $\xi(\sigma) = \mc{Q}^{\sigma} - \mc{Q}$ for all $\sigma \in \gal(\ov{\Q}/\Q)$ where $\mc{Q} \in [3]^{-1}E'^{(d)}(\Q_{3})$
is some point in $E'^{(d)}(\ov{\Q}_{3})$ that becomes a $\Q_{3}$ rational point under the multiplication by 3 map. Note that 3 is a prime
of good reduction for $E'^{(d)}$ (since isogenous elliptic curves have the same conductor) which implies that $\mc{Q}$ which is defined
over a finite extension of $\Q_{3}$ is a point of good reduction.

If $\xi$ is also in the image of $H^{1}(\Q, \F_{3}(\chi_{d}))$,
without loss of generality, then $\xi(\sigma) = \mc{Q}^{\sigma} - \mc{Q} \in \F_{3}(\chi_{d})$ for all $\sigma$. Consider $\sigma \in I_{3}$,
the inertia subgroup in $\gal(\ov{\Q}_{3}/\Q_{3})$. By commutativity of the reduction map and the Galois action,
$\wt{\mc{Q}^{\sigma} - \mc{Q}} = (\wt{\mc{Q}})^{\sigma} - \wt{\mc{Q}} = \wt{\mc{O}}$ where the last equality is because $\sigma \in I_{3}$.
Since $\mc{Q}^{\sigma} - \mc{Q} \in \{0, P_{d}, -P_{d}\} = \F_{3}(\chi_{d})$, as long as $P_{d}$ does not reduce to the point at infinity,
we must have $\xi(\sigma) = \mc{Q}^{\sigma} - \mc{Q} = \mc{O}$. Since by Assumption \ref{asmp4a},
we assumed $d\equiv 1 \imod{3}$, $\Q_{3}(\sqrt{d}) = \Q_{3}$ which implies that
$E'(\Q_{3}) \cong E'^{(d)}(\Q_{3})$. Thus $\wt{P}_{d} = \wt{P}$.
Recall that $P$ is a rational 3-torsion point for $E'$. Then by the analogue of the Nagell-Lutz Theorem for
local fields \cite[Theorem VII.3.4]{silverman1} we have $\wt{P} \neq \wt{\mc{O}}$.
Thus $\ker\psi$ is unramified at the prime 3 and hence so is $\im\phi$.
This completes the proof of Proposition \ref{prop4}.
\end{proof}

\subsection{Determining the 3-Selmer Group}\label{furthercond}
In this section, for notational convenience, let $G_{K} := \gal(\ov{K}/K)$ and $G_{L/K} := \gal(L/K)$.
For a number field $K$, let $h(K)$ denote the class number of $K$.
We make the following assumption on $d$:
\begin{asmp}\label{asmp4}
Assume that:
\begin{enumerate}[$(ii)$]
\item $3 \nmid h(\Q(\sqrt{d}))$.
\end{enumerate}
\end{asmp}
Let $K := \Q(\sqrt{d})$ be the quadratic extension associated with the quadratic twist by $\chi_{d}$. The Galois group $G_{K}$
lies in the kernel of the quadratic character $\chi_{d}$ and hence $G_{K}$ acts trivially on $\F_{3}(\chi_{d})$. Using the Inflation-Restriction sequence
(\cite[Proposition 2, p. 105]{washington}, with $G = G_{\Q}$ and $H = G_{\ov{\Q}/K}$) yields
\begin{align*}
\xymatrix{
0 \ar[r] & H^{1}(G_{K/\Q}, \F_{3}(\chi_{d})^{G_{\ov{\Q}/K}}) \ar[r] & H^{1}(\Q, \F_{3}(\chi_{d})) \ar[r] & H^{1}(G_{\ov{\Q}/K}, \F_{3}(\chi_{d}))^{G_{K/\Q}}
}.
\end{align*}
Since $\gal(\ov{\Q}/K) = \gal(\ov{K}/K)$, this exact sequence becomes
\begin{align*}
\xymatrix{
0 \ar[r] & H^{1}(G_{K/\Q}, \F_{3}(\chi_{d})) \ar[r] & H^{1}(\Q, \F_{3}(\chi_{d})) \ar[r] & H^{1}(G_{K}, \F_{3}(\chi_{d}))^{G_{K/\Q}}
}.
\end{align*}
Taking the everywhere unramified subgroup, we have
\begin{align}\label{fcondeq1}
\xymatrix{
0 \ar[r] & H^{1}(G_{K/\Q}, \F_{3}(\chi_{d}); \varnothing) \ar[r] & H^{1}(\Q, \F_{3}(\chi_{d}); \varnothing) \ar[r] & H^{1}(G_{K}, \F_{3}(\chi_{d}); \varnothing)^{G_{K/\Q}}
}.
\end{align}
Note that $\#\gal(K/\Q) = 2$ and $\#\F_{3}(\chi_{d}) = 3$.
Recall that an abelian group $A$ is uniquely divisible by $m$ if for each $a \in A$, there is a $b \in A$ such that
$a = mb$. Then we have the following proposition from group cohomology.
\begin{prop}[\cite{weiss}, Proposition 3-1-11, p. 90]\label{groupcohom}
If $G$ is a finite group of order $m$ and $A$ is a $G$-module which is uniquely divisible by $m$ then $H^{k}(G, A) = 0$ for all $k$.
\end{prop}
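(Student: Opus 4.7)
The plan is to combine two standard facts from group cohomology: the first says that $H^{k}(G, A)$ is annihilated by $|G| = m$, and the second says that unique $m$-divisibility of $A$ turns multiplication by $m$ into an automorphism on cohomology. Sandwiched between these two observations, $H^{k}(G, A)$ must vanish.

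First I would invoke the restriction-corestriction identity: for any subgroup $H \le G$ the composition $\operatorname{cor}_{H}^{G} \circ \res_{H}^{G}$ acts on $H^{k}(G, A)$ as multiplication by $[G : H]$. Applying this with $H = \{e\}$ gives $[G : H] = m$; since $H^{k}(\{e\}, A) = 0$ for every $k \ge 1$, the restriction map vanishes, and hence multiplication by $m$ annihilates $H^{k}(G, A)$ for every $k \ge 1$. In the Tate cohomology framework used by Weiss, the analogous restriction-corestriction identity extends the same conclusion to every $k \in \Z$, matching the stated proposition.

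Next I would exploit the hypothesis that $A$ is uniquely divisible by $m$: this says precisely that the multiplication-by-$m$ map $m \colon A \to A$ is a $G$-module automorphism. By functoriality of $H^{k}(G, -)$ it induces an automorphism of $H^{k}(G, A)$; but this induced map is just multiplication by $m$ on the cohomology group. Combining with the previous step, multiplication by $m$ on $H^{k}(G, A)$ is simultaneously the zero map and an automorphism, forcing $H^{k}(G, A) = 0$.

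There is no real difficulty in the argument; the only point to keep track of is the $k = 0$ case, where $H^{0}(G, A) = A^{G}$ need not vanish under these hypotheses (indeed $A^{G}$ is itself uniquely $m$-divisible, so the isomorphism conclusion alone does not trivialize it). The blanket statement for all $k$ must therefore be read in the Tate cohomology sense of the Weiss reference. In the application to \eqref{fcondeq1} only $k = 1$ is needed, so this interpretational subtlety does not affect the subsequent use in the paper.
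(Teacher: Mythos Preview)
Your argument is correct and is the standard proof of this fact. Note, however, that the paper does not supply its own proof of this proposition: it is quoted directly from Weiss's \emph{Cohomology of Groups} (Proposition~3-1-11) and used as a black box, so there is no in-paper proof to compare against. Your remark about the $k=0$ case is apt---the ``all $k$'' formulation is indeed the Tate-cohomology version, and only $k=1$ is invoked in the subsequent argument around \eqref{fcondeq1}.
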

Since $(2, 3) = 1$, $\F_{3}(\chi_{d})$ is uniquely divisible by 2 and hence by the above proposition we have $H^{1}(G_{K/\Q}, \F_{3}(\chi_{d}); \varnothing) = 0$.
Note that as $G_{K}$ acts trivially on $\F_{3}(\chi_{d})$, $H^{1}(G_{K}, \F_{3}(\chi_{d})) = \hom(G_{K}, \F_{3}(\chi_{d}))$ is
in one-to-one correspondence with cubic extensions of $K$ (by the same argument as on the top of \cite[p. 104]{washington}).
The everywhere unramified condition in $H^{1}(G_{K}, \F_{3}(\chi_{d}); \varnothing)$ translates to the cubic extensions
being everywhere unramified. As $3 \nmid h(\Q(\sqrt{d}))$ by Assumption \ref{asmp4}, from class field theory,
the only such extension is the trivial extension and hence it follows that $H^{1}(G_{K}, \F_{3}(\chi_{d}); \varnothing) = 0$.
Therefore \eqref{fcondeq1} implies that
\begin{align}\label{triangle}
H^{1}(\Q, \F_{3}(\chi_{d}); \varnothing) = 0.
\end{align}

We now compute $H^{1}(\Q, \F_{3}(\chi_{d}\om); \{3\})$. Let $G_{p} = G_{\Q_{p}}$ and $I_{p} \subset G_{p}$
the inertia subgroup. We recall an identity from \cite{ddt}. Let $M$ be a continuous discrete $G_{\Q}$-module of finite cardinality
with $M^{\ast} = \hom(M, \mu_{n}(\ov{\Q}))$ where $n$ is such that $nM = 0$ and $\mu_{n}(\ov{\Q})$ denotes
the group of $n$th roots of unity in $\ov{\Q}$. By a collection of local conditions for $M$, we mean a collection
$\mc{L} = \{L_{v}\}$ of subgroups $L_{v} \subset H^{1}(G_{v}, M)$ as $v$ runs through the primes of $\Q$ with
$L_{v} = H^{1}(G_{v}/I_{v}, M^{I_{v}})$ for all but finitely many $v$. By local Tate duality, $\mc{L}^{\ast} = \{L_{v}^{\perp}\}$
is a collection of local conditions for $M^{\ast}$. If $\mc{L}$ is a collection of local conditions for $M$, define
the corresponding Selmer group $H^{1}_{\mc{L}}(\Q, M)$ to be the subgroup $x \in H^{1}(\Q, M)$ such that
for all places $v$ of $\Q$, we have $\res_{v}(x) \in L_{v} \subset H^{1}(G_{v}, M)$.

\begin{prop}[\cite{ddt}, Theorem 2.18]
If $\mc{L}$ is a collection of local conditions for $M$, then $H^{1}_{\mc{L}}(\Q, M)$ is finite and
\begin{align}\label{ddtprop}
\frac{\# H^{1}_{\mc{L}}(\Q, M)}{\# H^{1}_{\mc{L}^{\ast}}(\Q, M^{\ast})} = \frac{\# H^{0}(\Q, M)}{\# H^{0}(\Q, M^{\ast})}\prod_{v \leq \infty}\frac{\# L_{v}}{\# H^{0}(\Q, M)}.
\end{align}
\end{prop}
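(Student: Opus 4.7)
The plan is to derive \eqref{ddtprop} as a form of the Greenberg--Wiles/Poitou--Tate formula, so the argument splits into a finiteness step and a duality computation built from the nine-term Poitou--Tate exact sequence.

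First I would dispose of finiteness. Let $S$ be the finite set consisting of the archimedean place, the primes dividing $\#M$, and the finitely many places where $L_{v} \neq H^{1}(G_{v}/I_{v}, M^{I_{v}})$. Every Selmer class is automatically unramified outside $S$, so $H^{1}_{\mc{L}}(\Q, M)$ embeds into $H^{1}(\Q_{S}/\Q, M)$, which is a classical finite group (for instance by Hermite--Minkowski combined with finiteness of $S$-class groups and $S$-units modulo torsion). Intersecting with the finite product $\prod_{v \in S} L_{v}$ cuts out $H^{1}_{\mc{L}}(\Q, M)$, which is therefore finite.

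For the equality itself, I would introduce the restriction maps
$$\lambda: H^{1}(\Q, M) \longrightarrow \prod_{v} H^{1}(\Q_{v}, M)/L_{v}, \qquad \lambda^{\ast}: H^{1}(\Q, M^{\ast}) \longrightarrow \prod_{v} H^{1}(\Q_{v}, M^{\ast})/L_{v}^{\perp},$$
so that $\ker\lambda = H^{1}_{\mc{L}}(\Q, M)$ and $\ker\lambda^{\ast} = H^{1}_{\mc{L}^{\ast}}(\Q, M^{\ast})$. Local Tate duality identifies the finite group $H^{1}(\Q_{v}, M)/L_{v}$ with the Pontryagin dual of $L_{v}^{\perp}$, and the Poitou--Tate nine-term exact sequence places $\lambda$ and $\lambda^{\ast}$ in mutually dual position, yielding an exact sequence
$$0 \to H^{1}_{\mc{L}}(\Q, M) \to H^{1}(\Q, M) \to \prod_{v} H^{1}(\Q_{v}, M)/L_{v} \to H^{1}_{\mc{L}^{\ast}}(\Q, M^{\ast})^{\vee} \to \cdots$$
whose higher terms involve $H^{0}$ and $H^{2}$ of $M$ and $M^{\ast}$. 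Counting cardinalities in this sequence produces the ratio on the left-hand side of \eqref{ddtprop} times a product of local contributions of the shape $\#L_{v}/\#H^{0}(\Q_{v}, M)$.

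The main obstacle is the local-to-global bookkeeping. Away from $S$ the factor $\#L_{v}/\#H^{0}(\Q_{v}, M)$ equals $1$ by the unramified case of local Tate duality, but at the archimedean place and at primes dividing $\#M$ one has to invoke the local Euler characteristic formula $\#H^{0}(\Q_{v}, M)\cdot \#H^{2}(\Q_{v}, M)/\#H^{1}(\Q_{v}, M) = |\#M|_{v}$ (suitably interpreted at infinity), together with the identity $\#H^{i}(\Q_{v}, M) = \#H^{2-i}(\Q_{v}, M^{\ast})$, to control the terms. Combining these local contributions with Tate's global Euler characteristic formula for $M$ and $M^{\ast}$ converts the tail of the Poitou--Tate sequence into the clean ratio $\#H^{0}(\Q, M)/\#H^{0}(\Q, M^{\ast})$. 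Once those cancellations are verified, \eqref{ddtprop} drops out. The organized version of this accounting is precisely what is carried out in \cite{ddt}, §2.7, cited in the statement.
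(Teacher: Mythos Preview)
The paper does not prove this proposition at all: it is stated as a citation of \cite[Theorem 2.18]{ddt} and used as a black box, with no argument given. Your sketch is the standard Greenberg--Wiles/Poitou--Tate derivation and is essentially what one finds in the cited reference, so in that sense you have supplied strictly more than the paper does. There is nothing to compare against here beyond noting that the paper treats the result as an external input, while you have (correctly) outlined how the proof actually goes.
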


In our case, $M = \F_{3}(\chi_{d})$ and $M^{\ast} = \F_{3}(\chi_{d}\om)$. For finite places $v$, we have $L_{v} = H^{1}(\F_{v}, \F_{3}(\chi_{d})^{I_{v}})$
(note that $G_{p}/I_{p} \cong \gal(\ov{\F}_{p}/\F_{p})$) and for the infinite place, $L_{\infty} = 0$. The dual condition
$\mc{L}^{\ast}$ is $L_{v}^{\perp} = H^{1}(\F_{v}, \F_{3}(\chi_{d}\om)^{I_{v}})$ at all places away from 3 and places no restriction
at the prime 3. Hence \eqref{ddtprop} yields that
\begin{align*}
\frac{\# H^{1}(\Q, \F_{3}(\chi_{d}); \varnothing)}{\# H^{1}(\Q, \F_{3}(\chi_{d}\om); \{3\})} = \frac{\# H^{0}(\Q, \F_{3}(\chi_{d}))}{\# H^{0}(\Q, \F_{3}(\chi_{d}\om))}\cdot \frac{1}{\# H^{0}(\R, \F_{3}(\chi_{d}))}\prod_{\ell < \infty}\frac{\# H^{1}(\F_{\ell}, \F_{3}(\chi_{d})^{I_{\ell}})}{\# H^{0}(\Q_{\ell}, \F_{3}(\chi_{d}))}.
\end{align*}
By Lemma 1 of \cite{washington}, $\# H^{1}(\F_{\ell}, \F_{3}(\chi_{d})^{I_{\ell}}) = \# H^{0}(\Q_{\ell}, \F_{3}(\chi_{d}))$ and
since $\chi_{d}$ and $\chi_{d}\om$ are nontrivial, $H^{0}(\Q, \F_{3}(\chi_{d})) = H^{0}(\Q, \F_{3}(\chi_{d}\om)) = 0$.
We also have $H^{0}(\R, \F_{3}(\chi_{d})) = \Z/3\Z$ if $d > 0$ and is trivial if $d < 0$.
Then
\begin{align*}
\frac{\# H^{1}(\Q, \F_{3}(\chi_{d}); \varnothing)}{\# H^{1}(\Q, \F_{3}(\chi_{d}\om); \{3\})} =
\begin{cases}
1/3 & \text{ if } d> 0\\
1 & \text{ if } d < 0
\end{cases}
\end{align*}
which implies that
\begin{align}\label{compeq1}
H^{1}(\Q, \F_{3}(\chi_{d}\om); \{3\}) =
\begin{cases}
\Z/3\Z & \text{ if } d > 0\\
0 & \text{ if } d< 0.
\end{cases}
\end{align}
%
%
%
%
%
%
%
%
To finally compute the 3-Selmer group, we make the following assumption pertaining to all $d$ chosen above.
\begin{asmp}\label{asmp5a}
Assume that:
\begin{enumerate}[$(ii)$]
\item $d > 0$ and for all $d$ chosen in Assumptions \ref{asmp1}, \ref{asmp2}, \ref{asmp4a}, and \ref{asmp4}, we have $\om(E^{(d)}) = -1$ and $E^{(d)}(\Q)_{\textrm{tors}} = 0$.
\end{enumerate}
\end{asmp}

\begin{prop}\label{3selrank}
If $d$ and $E/\Q$ satisfy Assumptions \ref{asmp1}--\ref{asmp5a}, then $\sel_{3}(E^{(d)}/\Q) = \Z/3\Z.$
\end{prop}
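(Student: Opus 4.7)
The plan is to sandwich $\sel_3(E^{(d)}/\Q)$ inside $\Z/3\Z$ from above using the cohomological analysis already in place, and then produce a nontrivial element from below using the root number hypothesis. Combining both inclusions will force equality.

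First I would assemble the upper bound from the work of Sections \ref{infd} and \ref{3sel}. The short exact sequence \eqref{cohomseq} yields the restriction map $\phi : \sel_3(E^{(d)}/\Q) \to H^1(\Q, \F_3(\chi_d))$ of Proposition \ref{prop4}, with $\ker \phi \subseteq H^1(\Q, \F_3(\chi_d\om); \{3\})$ and $\im \phi \subseteq H^1(\Q, \F_3(\chi_d); \varnothing)$. By \eqref{triangle} the target of $\im \phi$ vanishes, so $\phi = 0$ and $\sel_3(E^{(d)}/\Q) = \ker \phi \subseteq H^1(\Q, \F_3(\chi_d\om); \{3\})$. Since Assumption \ref{asmp5a} requires $d > 0$, the computation \eqref{compeq1} then gives $\sel_3(E^{(d)}/\Q) \subseteq \Z/3\Z$.

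Next I would exhibit a nontrivial class via the $3$-parity conjecture, which is known for elliptic curves over $\Q$ (T.\ and V.\ Dokchitser) and asserts that $\om(E^{(d)}) = (-1)^{\mathrm{corank}_{\Z_3} \sel_{3^\infty}(E^{(d)}/\Q)}$. Assumption \ref{asmp5a} gives $\om(E^{(d)}) = -1$, so the $\Z_3$-corank of $\sel_{3^\infty}(E^{(d)}/\Q)$ is odd, hence positive; in particular $\sel_{3^\infty}(E^{(d)}/\Q)$ contains a copy of $\Q_3/\Z_3$, forcing $\#\sel_{3^\infty}(E^{(d)}/\Q)[3] \geq 3$. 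To descend back to $\sel_3$, I would apply the snake lemma to the multiplication-by-$3$ sequence $0 \to E^{(d)}[3] \to E^{(d)}[3^\infty] \to E^{(d)}[3^\infty] \to 0$ together with the local conditions defining the two Selmer groups. The obstruction to identifying $\sel_3(E^{(d)}/\Q)$ with $\sel_{3^\infty}(E^{(d)}/\Q)[3]$ is controlled by $E^{(d)}(\Q)[3^\infty]/3E^{(d)}(\Q)[3^\infty]$, which vanishes by the torsion-free hypothesis in Assumption \ref{asmp5a}. Consequently $\sel_3(E^{(d)}/\Q) \cong \sel_{3^\infty}(E^{(d)}/\Q)[3]$ has order at least $3$.

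Combining the two bounds forces $\sel_3(E^{(d)}/\Q) = \Z/3\Z$. The only essential new input beyond the earlier sections is the $3$-parity theorem, and the main obstacle is invoking it correctly: one must verify that it applies to $E^{(d)}$ for each chosen $d$ and that the torsion-free hypothesis really does trivialise the snake-lemma correction term comparing $\sel_3$ with $\sel_{3^\infty}[3]$. Once these standard bookkeeping points are settled, the proposition follows by assembly.
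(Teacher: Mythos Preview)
Your proposal is correct and follows essentially the same route as the paper: the upper bound via Proposition~\ref{prop4}, \eqref{triangle}, and \eqref{compeq1} is identical, and the lower bound uses the same two ingredients---the isomorphism $\sel_3(E^{(d)}/\Q) \cong \sel_{3^\infty}(E^{(d)}/\Q)[3]$ under the torsion-free hypothesis (the paper cites \cite[Proposition~5.10(c)]{sdistr} rather than spelling out the snake lemma) and the Dokchitsers' $p$-parity theorem. The only cosmetic difference is that the paper phrases the parity step as a proof by contradiction (assume $\sel_3 = 0$, deduce $\sel_{3^\infty} = 0$, contradict odd corank) whereas you argue directly.
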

\begin{proof}
With $\phi: \sel_{3}(E^{(d)}/\Q) \rightarrow H^{1}(\Q, \F_{3}(\chi_{d}))$,
by Proposition \ref{prop4} and \eqref{triangle}, we have $\im\phi \subset H^{1}(\Q, \F_{3}(\chi_{d}); \varnothing) = 0$. Then
$\sel_{3}(E^{(d)}/\Q) \subset \ker\phi \subset H^{1}(\Q, \F_{3}(\chi_{d}\om); \{3\}) = \Z/3\Z$
as $d > 0$ and \eqref{compeq1}.
This implies that $\sel_{3}(E^{(d)}/\Q)$ is either $0$ or $\Z/3\Z$.

Suppose $\sel_{3}(E^{(d)}/\Q) = 0$. Since $E^{(d)}(\Q)_{\textrm{tors}} = 0$, by \cite[Proposition 5.10(c)]{sdistr},
\begin{align}\label{s3inf}
\sel_{3^{\infty}}(E^{(d)}/\Q)[3] \cong \sel_{3}(E^{(d)}/\Q) = 0.
\end{align}
Recall that
$\sel_{3^{\infty}}(E^{(d)}/\Q) \cong (\Q_{3}/\Z_{3})^{s} \oplus F$ for some finite abelian 3-group $F$.
If $s \geq 1$, then $(\Q_{3}/\Z_{3})^{s}[3] \cong (\Z/3\Z)^{s}$ and hence contradicts \eqref{s3inf}.
Therefore $s = 0$. If $F \neq 0$, then $F[3] \neq 0$, as $F$ is a finite 3-group and by Cauchy's Theorem contains an element
of order 3. This again contradicts \eqref{s3inf} and hence we must have $F = 0$.
Therefore $\sel_{3^{\infty}}(E^{(d)}/\Q) = 0$.
However, since $\om(E^{(d)}) = -1$, by the $p$-parity conjecture proven in \cite{dokchitser},
we must have $s \equiv 1 \imod{2}$. This is a contradiction.
Therefore $\sel_{3}(E^{(d)}/\Q) = \Z/3\Z.$
This completes the proof of Proposition \ref{3selrank}.
\end{proof}

\begin{rem}
The proof of the above proposition also allows us to determine the $3^{\infty}$-Selmer group.
Indeed, since $E^{(d)}(\Q)_{\textrm{tors}} = 0$, $\sel_{3^{\infty}}(E^{(d)}/\Q)[3] \cong \sel_{3}(E^{(d)}/\Q) = \Z/3\Z$.
As $(\Q_{3}/\Z_{3})[3] = \Z/3\Z$, it follows that we must have $F = 0$ and $s = 1$. Therefore
we have $\sel_{3^{\infty}}(E^{(d)}/\Q) = \Q_{3}/\Z_{3}.$
\end{rem}

\subsection{Proof of Theorem \ref{resthm} and Corollary \ref{rescor}}\label{respfs}
In Assumption \ref{asmp2}, we assumed that $\ord_{2}(\Delta_{E})$ is odd.
To make use of the Davenport-Heilbronn/Nakagawa-Horie estimates (\cite{nakagawahorie, taya}) regarding class groups, we need to make
the following assumption.
\begin{asmp}\label{asmp5}
Assume that:
\begin{enumerate}[$(i)$]
\item $\ord_{2}(N_{E}) = 1$.
\end{enumerate}
\end{asmp}
Fix a semistable elliptic curve $E/\Q$ satisfying Assumptions \ref{asmp1}--\ref{asmp5}. We show that a positive proportion
of its quadratic twists have 3-Selmer rank 1 and global root number $-1$. We recall from Assumptions \ref{asmp1}--\ref{asmp5}
that we choose $d$ to satisfy
\begin{equation}\label{dcond}
\begin{split}
&d > 0\\
&d \text{ is squarefree}\\
\end{split}
\quad\quad\quad
\begin{split}
d & \equiv 1 \imod{3}\\
d & \equiv 1 \imod{4}
\end{split}
\quad\quad\quad
\begin{split}
&(d, \Delta_{E}) = 1\\
&3 \nmid h(\Q(\sqrt{d}))
\end{split}
\end{equation}
and for each $\ell \mid N_{E}$ we require $d$ as follows
\begin{align}\label{dtab2}
\begin{array}{ccc|c}
\text{Hypotheses on $\ell$} &&& \text{Choice for $d$}\\\hline
\text{$E$ has split reduction at $\ell$} & \ell = 2 && d \equiv 5 \imod{8}\\
& \ell \neq 2& \ell \equiv 1 \imod{3} & d \equiv n_{1} \imod{\ell}\\
& \ell \neq 2& \ell \equiv 2 \imod{3} & d \equiv 1 \imod{\ell}\\
\text{$E$ has nonsplit reduction at $\ell$} & \ell = 2 && d \equiv 1 \imod{8}\\
& \ell \neq 2& \ell \equiv 1 \imod{3} & d \equiv 1 \imod{\ell}\\
& \ell \neq 2& \ell \equiv 2 \imod{3} & d \equiv n_{2} \imod{\ell}
\end{array}
\end{align}
where $n_{1}$ and $n_{2}$ are chosen so that the Legendre symbol $(n_{1}/\ell) = -1$ and $(n_{2}/\ell) = -1$.
Furthermore, by Assumption \ref{asmp5a}, for whatever $d$ we choose above, we assume that it automatically is such that $\om(E^{(d)}) = -1$ and $E^{(d)}(\Q)_{tors} = 0$.
\begin{rem}
Note that the $\ell = 2$, split choice is slightly different from \eqref{dtab}.
In particular, we chose $d \equiv 5 \imod{8}$ not just $d \not\equiv 1 \imod{8}$. This is a convenient choice
as to absorb the $d \equiv 1 \imod{4}$ condition later. We also made a choice for the $\ell \neq 2$, $\ell \equiv 2 \imod{3}$, split case
and the $\ell \neq 2$, $\ell \equiv 1 \imod{3}$, nonsplit case. \hfill\qed
\end{rem}

We would like to show that
\begin{align}\label{countd}
\lim_{x \rightarrow \infty}\frac{\#\{d : 0 < d < x, d \text{ satisfies \eqref{dcond} and \eqref{dtab2}}\}}{x} > 0.
\end{align}
By our choice of $d$ in $\eqref{dtab2}$, we have $(d, N_{E}) = 1$ and hence the assumption that $(d, \Delta_{E}) = 1$ is redundant. Note that using
the Chinese Remainder Theorem in the choice of $d$ in \eqref{dtab2} above yields the congruence
$d \equiv \alpha \imod{4^{s(N_{E})}N_{E}}$
with $\alpha \neq 0$ where $s(n) = 1$ if $n$ is even and $0$ if $n$ is odd. Combining this with the assumption that
$d \equiv 1 \imod{3}$ yields a congruence mod $4^{s(N_{E})}\cdot 3N_{E}$ since 3 is a prime of good reduction
and hence relatively prime to $N_{E}$.

If $N_{E}$ is even, then for some $\beta \neq 0$, we have $d \equiv \beta \imod{12N_{E}}$ and $d \equiv 1 \imod{4}$. However,
as $N_{E}$ is even, 2 is a prime of multiplicative reduction and the condition that $d \equiv 1 \imod{4}$ is already included
in $d \equiv \beta \imod{12N_{E}}$, since we have chosen $d \equiv 1 \imod{8}$ or $d \equiv 5 \imod{8}$.

If $N_{E}$ is odd, then we have $d \equiv \beta \imod{3N_{E}}$ and $d \equiv 1 \imod{4}$.
In this case, the Chinese Remainder Theorem yields a congruence $\gamma\imod{12N_{E}}$ since $(3N_{E}, 4) = 1$.

Therefore regardless, we have $d \equiv \delta \imod{12N_{E}}$ for some $\delta$. Therefore we want to show that we have a positive proportion
of positive squarefree $d$ satisfying
\begin{align*}
d \equiv \delta \imod{12N_{E}} \quad\quad\quad 3 \nmid h(\Q(\sqrt{d}))
\end{align*}
(note that since $(d, N_{E}) = 1$, we have $(\delta, 12N_{E}) = 1$ as $d \equiv 1 \imod{3}$ and $d\equiv 1 \imod{4}$).

The Nakagawa-Horie estimates deal with quadratic fields whose discriminants
are $m \imod{N}$ and whose class number is not divisible by 3. For these estimates to hold
we need the following condition (see \cite[p. 21]{nakagawahorie})
\begin{quote}
If an odd prime number $p$ is a common divisor of $m$ and $N$, then $p^{2}$
divides $N$ but not $m$. Further if $N$ is even then $(i)$ 4 divides $N$ and
$m \equiv 1 \imod{4}$, or $(ii)$ $16$ divides $N$ and $m \equiv 8$ or $12 \imod{16}$.
\end{quote}
Take $m := \delta$ and $N := 12N_{E}$. As $(\delta, 12N_{E}) = 1$ and $\ord_{2}(N_{E}) = 1$ under Assumption \ref{asmp5},
the above assumptions on $m$ and $N$ hold.

Let $K^{+}(x)$ be the set of all real quadratic fields $k$ with discriminant $\Delta_{k} < x$ and let
$$K^{+}(x, m, N) = \{k \in K^{+}(x) : \Delta_{k} \equiv m \imod{N}\}.$$
As the congruence $\Delta_{k} \equiv m \imod{N}$ implies that $\Delta_{k} \equiv 1 \imod{4}$, it follows that
\begin{align*}
K^{+}(x, m, N) = \{\Q(\sqrt{d}) : 0 < d < x, \mu(d)^{2} = 1, d \equiv m \imod{N}\}.
\end{align*}
Let $$K_{\ast}^{+}(x, m, N) = \{k \in K^{+}(x, m, N) : h(k) \not\equiv 0 \imod{3}\}.$$
Note that $\# K_{\ast}^{+}(x, m, N)$ is precisely the set of $d$ that we want to count in \eqref{countd}.
For a quadratic field $k$, denote by $h_{3}^{\ast}(k)$ the number of ideal classes of $k$ whose
cubes are principal.
By \cite[pp. 1289-1290]{taya},
\begin{align*}
\lim_{x \rightarrow \infty}\frac{\# K_{\ast}^{+}(x, m, N)}{x} &\geq \lim_{x \rightarrow \infty}\frac{3}{2}\cdot \frac{\#K^{+}(x, m, N)}{x} - \frac{1}{2}\cdot \frac{\sum_{k \in K^{+}(x, m, N)}h_{3}^{\ast}(k)}{x}\\
&=\lim_{x \rightarrow \infty}\frac{5}{6}\cdot \frac{\# K^{+}(x, m, N)}{x} = \frac{5}{\pi^{2}\varphi(12N_{E})}\prod_{p \mid 12N_{E}}\frac{p}{p + 1} > 0
\end{align*}
which proves \eqref{countd}.
Then we have shown that for $E$ and $d$ satisfying Assumptions \ref{asmp1}--\ref{asmp5}, a positive proportion of the quadratic twists
of $E$ have $\sel_{3}(E^{(d)}/\Q) = \Z/3\Z$ and $\om(E^{(d)}) = -1$. This proves Theorem \ref{resthm}.

Corollary \ref{rescor} is immediate since $\om(E^{(d)}) = -1$
implies that $\rkan(E^{(d)}) \equiv 1 \imod{2}$. Under the assumption of the rank part of the Birch and Swinnerton-Dyer conjecture,
we have $\rk(E^{(d)}) \equiv 1 \imod{2}$. Since the 3-Selmer rank of $E^{(d)}$ is 1, we have $\rk(E^{(d)}) \leq 1$
and hence $\rk(E^{(d)}) = 1$.

Of course Corollary \ref{rescor} is true even if we knew only that $\sel_{3}(E^{(d)}/\Q) \subset \Z/3\Z$
since this still implies that the 3-Selmer rank is $\leq 1$ and hence $\rk(E^{(d)}) \leq 1$. However,
for completeness, we have determined the precise 3-Selmer group and have aimed for unconditional results
as opposed to ones that assume the Birch and Swinnerton-Dyer conjecture.

We now give infinitely many explicit elliptic curves $E$ which satisfy Assumptions \ref{asmp1}--\ref{asmp5}.

\section{Proof of Theorem \ref{exthm}}\label{example}
\subsection{28 Infinite Families}
We first construct 28 nonisomorphic infinite families of elliptic curves $E/\Q$ satisfying the following properties
\begin{enumerate}[(1)]
\item $E = C/\langle P \rangle$ where $C/\Q$ is an elliptic curve with rational 3-torsion point $P$,\label{pcond}
\item $E/\Q$ is semistable,\label{cond2}
\item 3 is a prime of good reduction, and\label{cond3}
\item At all places $v$ of bad reduction $3 \nmid \ord_{v}(\Delta_{E})$.\label{cond4}
\end{enumerate}
The infinite families of elliptic curves we construct here will serve as the main starting point for constructing infinitely
many elliptic curves each of which satisfy Theorem \ref{resthm}.

For $m = 1, 2, 5, 7, 8, 10, 11, 13, 14, 16, 17, 19, 22, 23$ and $n \in \Z_{\geq 0}$, consider
\begin{align}\label{nchoice}
\mp 2(m + 24n)(62208n^{2} + (5184m \mp 432)n + (108m^{2} \mp 18m + 1)) = \mp 2^{i}\delta_{m}(n)
\end{align}
where $i = 1$ for $m$ odd, $i = 2$ for $m \neq 8, 16$ and even, and $i = 4$ for $m = 8, 16$.
Let $n$ be such that $\delta_{m}(n)$ is squarefree (possibly containing a factor of 2 when $m = 8, 16$).

Recall our convention regarding signs in \eqref{nchoice} first mentioned in the footnote associated to the discussion
of \eqref{choice}. If we choose to consider $-2^{i}\delta_{m}(n)$ in \eqref{nchoice}, then
in all subsequent expressions, $\mp$ is to be read as ``$-$" and $\pm$ is to be read as ``$+$".
If instead we choose to consider $+2^{i}\delta_{m}(n)$ in \eqref{nchoice}, then in all subsequent expressions,
$\mp$ is to be read as ``$+$" and $\pm$ is to be read as ``$-$".

Define an elliptic curve $C_{m, n}/\Q$ by (not necessarily the global minimal equation)
$$Y^{2} = X^{3} + d(aX + b)^{2}$$ where $d = 1$,
\begin{align*}
a = 18(m + 24n)\mp 1, \quad \text{ and } \quad b = (4/27)(a^{3} \pm 1).
\end{align*}
%
By how $C_{m, n}$ is defined, it has a rational 3-torsion point $P := (0, b\sqrt{d}) = (0, b)$. From \cite[p. 372]{cohen}, an equation for
$C_{m, n}/\langle P \rangle$ is
\begin{align}\label{emnold}
Y^{2} = X^{3} + D(AX + B)^{2}
\end{align}
where $D = -3d = -3$, $A = a$, and $B = (27b - 4a^{3}d)/9 = \pm 4/9$.
Note that the above equation is not the global minimal Weierstrass equation for \eqref{emnold}. For reference,
we compute the discriminant of \eqref{emnold}. Let $\alpha := 18(m + 24n)$ and $\beta := 2(m + 24n)$. Then $\alpha = 3^{2}\beta$ and $A = \alpha \mp 1$.
The discriminant of \eqref{emnold} is
\begin{align}\label{oldisc}
16B^{3}D^{2}(4DA^{3} - 27B) = \mp 2^{12}3^{-3}(A^{3} \pm 1).
\end{align}
Since
$A^{3} \pm 1 = \alpha^{3} \mp 3\alpha^{2} + 3\alpha = 3^{3}\beta(3^{3}\beta^{2} \mp 3^{2}\beta + 1),$
it follows from \eqref{oldisc} that the discriminant is
\begin{align}\label{oldisc1}
16B^{3}D^{2}(4DA^{3} - 27B) = \mp 2^{12}\beta(3^{3}\beta^{2} \mp 3^{2}\beta + 1).
\end{align}
We now give the global minimal Weierstrass equation for \eqref{emnold}.
Let $r := 1/3 + A^{2}$. Applying the transformations
$X \mapsto 4x + r, Y \mapsto 8y + 4x$
to \eqref{emnold} shows that the global minimal Weierstrass equation is given by
\begin{align}\label{mineqn}
y^{2} + xy = x^{3} + H(m, n)x + J(m, n)
\end{align}
where
\begin{align*}
H(m, n) = \frac{1}{16}(2ABD + 2A^{2}Dr + 3r^{2}) \quad\text{ and }\quad J(m, n) = \frac{1}{64}(B^{2}D + 2ABDr + A^{2}Dr^{2} + r^{3}).
\end{align*}
We define \eqref{mineqn} to be the elliptic curve $E_{m, n}/\Q$. By construction, $E_{m, n}$ satisfies
Property \eqref{pcond}.
\begin{rem}\label{remtrans}
The values for $H(m, n)$ and $J(m, n)$ were found using the change of variables formula in \cite[p. 45]{silverman1} with $a_{1} = 0$, $a_{2} = DA^{2}$,
$a_{3} = 0$, $a_{4} = 2ABD$, $a_{6} = DB^{2}$ the coefficients corresponding to \eqref{emnold} and
$a_{1}' = 1$, $a_{2}' = 0$, $a_{3}' = 0$, $a_{4}' = H(m, n)$, $a_{6}' = J(m, n)$ the coefficients corresponding to \eqref{mineqn}
where $r = 1/3 + A^{2}$, $s = 1$, $t = 0$, and $u = 2$. We will denote the invariants corresponding to \eqref{mineqn} with a $\prime$
(such as $b_{1}'$, $b_{2}'$, etc.) and those corresponding to \eqref{emnold} without one (such as $b_{1}$, $b_{2}$, etc.).\hfill\qed
\end{rem}
It is not immediate that $H(m, n)$ and $J(m, n)$ are integers. We check that this is indeed true.
\begin{lemma}\label{exlem1}
$H(m, n) \in \Z$.
\end{lemma}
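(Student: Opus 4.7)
The plan is to reduce the claim to a divisibility problem in $A$. First I would substitute $D = -3$, $B = \pm 4/9$, and $r = 1/3 + A^2$ into
\[
H(m, n) = \frac{2ABD + 2A^2Dr + 3r^2}{16}
\]
and simplify. Computing the three summands gives $2ABD = \mp 8A/3$, $2A^2Dr = -2A^2 - 6A^4$, and $3r^2 = 1/3 + 2A^2 + 3A^4$, and summing yields $(1 \mp 8A - 9A^4)/3$. Therefore
\[
H(m, n) = \frac{1 \mp 8A - 9A^4}{48},
\]
so it suffices to show that $48 \mid 1 \mp 8A - 9A^4$.

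To verify this divisibility, I would split $48 = 16 \cdot 3$ via the Chinese Remainder Theorem and check the two congruences separately. For the mod $3$ part, the identity $A = 18(m + 24n) \mp 1 \equiv \mp 1 \pmod 3$ makes $9A^4 \equiv 0$ and $\mp 8A \equiv (\mp 1)(\mp 8) = 8 \equiv 2 \pmod 3$, giving $1 \mp 8A - 9A^4 \equiv 0 \pmod 3$. For the mod $16$ part, the key input is that $A$ is necessarily odd (as $\mp 1$ plus an even integer), so $A^2 \equiv 1 \pmod 8$ and hence $A^4 \equiv 1 \pmod{16}$; the expression then reduces to $1 \mp 8A - 9 = -8 \mp 8A = -8(1 \pm A)$, which is divisible by $16$ because $1 \pm A$ is even.

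Both steps are routine algebra. The only real source of friction is the paper's coupled sign convention (if $\mp$ is chosen to be $-$ then $\pm$ is $+$, and vice versa), which must be tracked carefully through the symbolic manipulation, particularly in the mod $3$ check where the cancellation $(\mp 1)(\mp 8) = 8$ is used. Once this bookkeeping is in place, the lemma requires no deeper arithmetic input about $m$ or $n$; in fact the same argument shows $H(m, n) \in \Z$ for \emph{every} pair of integers $m, n$, independent of the specific values in \eqref{choice} or the squarefreeness hypothesis.
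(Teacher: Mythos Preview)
Your proof is correct and follows the same overall strategy as the paper: reduce to showing $48 \mid 1 \mp 8A - 9A^4$ and check the congruence separately modulo $3$ and modulo $16$. The modulo $3$ step is identical to the paper's. For the modulo $16$ step your argument is actually a bit cleaner: the paper substitutes $A \equiv 2m \mp 1 \pmod{16}$ and expands $(2m \mp 1)^4$ explicitly, whereas you observe directly that any odd $A$ satisfies $A^4 \equiv 1 \pmod{16}$, which immediately reduces the expression to $-8(1 \pm A)$. Your closing remark that the argument works for arbitrary integers $m,n$ is correct and is implicit in the paper's computation as well.
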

\begin{proof}
Applying the definitions of $A$, $B$, and $D$ yields that
\begin{align*}
H(m, n) = \frac{1}{16}(2ABD + 2A^{2}Dr + 3r^{2}) = \frac{1}{48}(1 \mp 8A - 9A^{4}).
\end{align*}
Since $A \equiv \mp 1 \imod{3}$,
$1 \mp 8A - 9A^{4} \equiv 1 \mp 8A \equiv 1 + 8 \equiv 0 \imod{3}$. As $A \equiv 2m \mp 1 \imod{16}$,
\begin{align}\label{exlem1eq1}
1 \mp 8A - 9A^{4} \equiv 1 \mp 8(2m \mp 1) - 9A^{4} \equiv 9(1 - A^{4}) \imod{16}.
\end{align}
Since $(2m \mp 1)^{4} = 16m^{4} \mp 32m^{3} + 24m^{2} \mp 8m + 1$,
$1 - A^{4} \equiv 8(-3m^{2} \pm m) \equiv 0 \imod{16}$ where the last congruence is because
$-3m^{2} \pm m \equiv m^{2} \pm m \equiv m(m + 1) \equiv 0 \imod{2}$. Combining this and \eqref{exlem1eq1}
yields that $1 \mp 8A - 9A^{4} \equiv 0 \imod{16}$. Thus $1 \mp 8A - 9A^{4} \equiv 0 \imod{48}$ and hence
$H(m, n) \in \Z$. This completes the proof of Lemma \ref{exlem1}.
\end{proof}
\begin{lemma}\label{exlem2}
$J(m, n) \in \Z$.
\end{lemma}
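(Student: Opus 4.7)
The plan is to follow the strategy of Lemma \ref{exlem1}: first express $J(m,n)$ as a single fraction $N(A)/576$ whose numerator $N(A)$ is a polynomial in $A$ alone with integer coefficients, then verify separately that $9 \mid N(A)$ and $64 \mid N(A)$. Since $\gcd(9,64) = 1$, this yields $576 \mid N(A)$ and hence $J(m,n) \in \Z$.

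First I would substitute $B = \pm 4/9$, $D = -3$, and $r = 1/3 + A^2$ into the definition
\begin{align*}
J(m,n) = \frac{1}{64}\bigl(B^{2}D + 2ABDr + A^{2}Dr^{2} + r^{3}\bigr)
\end{align*}
and collect terms. After routine simplification (the $A^{2}$-terms from $A^{2}Dr^{2}$ and $r^{3}$ cancel) one obtains $J(m,n) = N(A)/576$ with $N(A) = -5 \mp 8A \mp 24A^{3} - 9A^{4} - 18A^{6} \in \Z[A]$, where the signs obey the convention of the footnote to \eqref{choice}. This is the explicit object whose divisibility properties we need to check.

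For divisibility by $9$: since $A = 18(m+24n)\mp 1 \equiv \mp 1 \pmod{9}$, the terms $9A^{4}$ and $18A^{6}$ vanish mod $9$, and the remainder reduces to $-5 \mp 8(\mp 1) \mp 24(\mp 1)^{3} = -5 + 8 + 24 = 27 \equiv 0 \pmod 9$, uniformly in the choice of sign.

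For divisibility by $64$: observe that $A$ is odd, so $A^{2} \equiv 1 \pmod 8$; write $A^{2} = 1 + 8t$ for some integer $t$. Expanding binomially gives $A^{4} \equiv 1 + 16t$ and $A^{6} \equiv 1 + 24t \pmod{64}$, while $A^{3} \equiv A(1+8t) \pmod{64}$. The main thing to check — and the one potentially delicate point — is that when these are substituted into $N(A)$, the $t$-dependent parts cancel exactly modulo $64$: the contribution $-16t$ from $-9A^{4}$ and the contribution $-48t$ from $-18A^{6}$ combine to $-64t \equiv 0$, and the $t$-contribution from $\mp 24 A^{3}$ is $\mp 192 t A \equiv 0 \pmod{64}$. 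What is left is a constant (depending on the sign) of the form $-5 \mp 32 A - 27 \equiv -32 \mp 32 \pmod{64}$ (using that $A$ is odd so $\mp 32 A \equiv \mp 32$), which is $0$ mod $64$ in both sign cases. The main obstacle is precisely verifying this cancellation of $t$-terms cleanly enough to avoid having to split into cases according to the parities of $m$ and $n$; once the cancellation is seen to be automatic, the lemma follows for every $m$ and $n$ in the allowed ranges.
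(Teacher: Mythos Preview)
Your proof is correct and follows the same overall plan as the paper: write $J(m,n)=N(A)/576$ with $N(A)=-5\mp 8A\mp 24A^{3}-9A^{4}-18A^{6}$, then verify $9\mid N(A)$ and $64\mid N(A)$ separately. Your mod~$9$ argument is essentially identical to the paper's.

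Where you diverge is in the mod~$64$ step. The paper substitutes $A=\alpha\mp 1$ with $\alpha=18(m+24n)$, expands $N(A)$ as a polynomial in $\alpha$, and then writes $\alpha=2\gamma$ and reduces further to a congruence in $m$ modulo $4$, finally checking that $-m^{4}\mp 2m^{3}-m^{2}=-m^{2}(m\pm 1)^{2}\equiv 0\pmod 4$. Your argument is more economical: you use only that $A$ is odd, write $A^{2}=1+8t$, and observe that the $t$-contributions from $-9A^{4}$ and $-18A^{6}$ sum to $-64t\equiv 0$ while the $t$-contribution from $\mp 24A^{3}$ is $\mp 192At\equiv 0$, leaving $-32\mp 32A\equiv -32\mp 32\equiv 0\pmod{64}$. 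This bypasses the expansion in $\alpha$ and the subsequent descent to $m$, and makes it transparent that no case analysis on the parity of $m$ or $n$ is needed. The paper's route has the minor advantage of producing the exact polynomial identity \eqref{alphacalc}, which is reused later in the proof of Proposition~\ref{torprop}; your route is otherwise cleaner for the purposes of this lemma alone.
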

\begin{proof}
Computation yields that
\begin{align*}
J(m, n) = \frac{1}{64}(B^{2}D + 2ABDr + A^{2}Dr^{2} + r^{3}) = -\frac{1}{2^{6}3^{2}}(5 \pm 8A \pm 24A^{3} + 9A^{4} + 18A^{6}).
\end{align*}
Since $A \equiv \mp 1 \imod{9}$, $A^{3} \equiv \mp 1 \imod{9}$ and hence
$5 \pm 8A \pm 24A^{3} + 9A^{4} + 18A^{6} \equiv 5 - 8 - 24 \equiv 0 \imod{9}.$

We now claim that $5 \pm 8A \pm 24A^{3} + 9A^{4} + 18A^{6} \equiv 0 \imod{64}$. Recall that $\alpha = 18(m + 24n)$ and $A = \alpha \mp 1$.
Expanding $A^{3}$, $A^{4}$, and $A^{6}$ in terms of $\alpha$ yields that
\begin{align}\label{alphacalc}
5 \pm 8A \pm 24A^{3} + 9A^{4} + 18A^{6} = 18\alpha^{6} \mp 108\alpha^{5} + 279\alpha^{4} \mp 372\alpha^{3} + 252\alpha^{2} \mp 64\alpha.
\end{align}
Since $\alpha = 18(m + 24n)$, $\alpha^{6} \equiv 0 \imod{64}$ and $44\alpha^{5} \equiv 0 \imod{64}$. Then by the above centered equation,
writing $\alpha = 2\gamma$ yields that
$5 \pm 8A \pm 24A^{3} + 9A^{4} + 18A^{6} \equiv 23\alpha^{4} \mp 52\alpha^{3} + 60\alpha^{2} \imod{64} = 2^{4}(23\gamma^{4} \mp 26\gamma^{3} + 15\gamma^{2}) \imod{64}.$
Since $\gamma = m + 24n \equiv m \imod{4}$,
$23\gamma^{4} \mp 26\gamma^{3} + 15\gamma^{2} \equiv -\gamma^{4} \mp 2\gamma^{3} - \gamma^{2} \equiv -m^{4}\mp 2m^{3}-m^{2} \equiv 0 \imod{4}.$
It follows that $5 \pm 8A \pm 24A^{3} + 9A^{4} + 18A^{6} \equiv 0 \imod{64}.$ Therefore $J(m, n) \in \Z$. This completes the proof
of Lemma \ref{exlem2}.
\end{proof}
By the discussion in the Remark \ref{remtrans} and \eqref{oldisc1}, it follows that the discriminant of $E_{m, n}$ is
\begin{align}
\Delta_{E_{m, n}} &= \mp \beta(3^{3}\beta^{2} \mp 3^{2}\beta + 1)\nonumber\\
& = \mp 2(m + 24n)(62208n^{2} + (5184m \mp 432)n + (108m^{2} \mp 18m + 1)).\label{egendisc}
\end{align}
Note that from \eqref{nchoice}, all primes $\ell \neq 2$ which divide $\Delta_{E_{m, n}}$ only divide $\Delta_{E_{m, n}}$ once.
To show that $E_{m, n}$ is semistable, we now consider three cases for $m$.

Consider the case when $m = 1, 5, 7, 11, 13, 17, 19, 23$.
These constitute all the odd choices of $m$. Then by the choice of $n$ in \eqref{nchoice},
$\Delta_{E_{m, n}} = \mp 2\delta_{m}(n)$ where
$$\delta_{m}(n) = (m + 24n)(62208n^{2} + (5184m \mp 432)n + (108m^{2} \mp 18m + 1))$$
is squarefree and relatively prime to 2. Therefore $E_{m, n}$ is semistable.

Consider the case when $m = 2, 10, 14, 22$.
These constitute all the even choices for $m$ aside from $m = 8, 16$. Writing $m = 2k$, $k$ odd, we have
$\Delta_{E_{m, n}} = \mp 2^{2}\delta_{m}(n)$ where
$$\delta_{m}(n) = (k + 12n)(62208n^{2} + (10368k \mp 432)n + (432k^{2} \mp 36k + 1)).$$
Since $k$ is odd, $\delta_{m}(n) \equiv 1 \imod{2}$.
By our assumption on $n$, $\delta_{m}(n)$ is squarefree. Thus to show that
$E_{m, n}$ is semistable, it suffices to show that 2 is of multiplicative reduction. Recall that 2 is of multiplicative
reduction if and only if $\ord_{2}(\Delta_{E_{m, n}}) > 0$ (which is clear in this case) and $\ord_{2}(c_{4}') = 0$ (see for example, \cite[Proposition 4.4]{schmitt}).
Since $b_{2} = a_{1}^{2} + 4a_{2}$ and $b_{4} = 2a_{4} + a_{1}a_{3}$, it follows that $c_{4} = b_{2}^{2} - 24b_{4} = 16AD(A^{3}D - 6B)$.
Then as $2^{4}c_{4}' = c_{4}$ and $A \equiv 1 \imod{2}$, we have
\begin{align}\label{c4p}
c_{4}' = AD(A^{3}D - 6B) = A(9A^{3} \pm 8) \equiv 1 \imod{2}
\end{align}
which implies that $\ord_{2}(c_{4}') = 0$. Therefore 2 is of multiplicative reduction for $E_{m, n}$ in this case
and hence $E_{m, n}$ is semistable.

Consider the case when $m = 8, 16$. Writing $m = 8k$ yields that
$\Delta_{E_{m, n}} = \mp 2^{4}\delta_{m}(n)$ where
$$\delta_{m}(n) = (k + 3n)(62208n^{2} + (41472k \mp 432)n + (6912k^{2} \mp 144k + 1)).$$
By our assumption on $n$, $\delta_{m}(n)$ is squarefree and contains at most one factor of 2 (depending on whether
or not $k + 3n$ is even or odd).
Then all primes other than two which divide the discriminant are of multiplicative reduction.
To show that 2 is of multiplicative reduction, it again suffices to show that $\ord_{2}(c_{4}') = 0$
which by the same calculation in the above case shows that this is indeed true. Therefore $E_{m, n}$ is semistable
in this case.
Therefore in all three cases we have shown that $E_{m, n}$ is semistable, satisfying Property \eqref{cond2}.

Since
$\mp 2(m + 24n)(62208n^{2} + (5184m \mp 432)n + (108m^{2} \mp 18m + 1)) \equiv \mp 2m \imod{3}$
and for all $m$ chosen $\mp 2m \not\equiv 0 \imod{3}$, it follows that $E_{m, n}$ satisfies Property \eqref{cond3}.
Furthermore, since $\Delta_{E_{m, n}} = \mp 2^{j}\gamma_{m}(n)$ where $\gamma_{m}(n)$ is squarefree, $(\gamma_{m}(n), 2) = 1$ and $j = 1, 2, 4$, or $5$,
it follows that $E_{m, n}$ satisfies Property \eqref{cond4}.

\begin{rem}
Note that the remaining $m$ not chosen with $1 \leq m \leq 24$ are such that $\mp 2m \equiv 0 \imod{3}$ (occurs when $m = 3, 6, 9, 12, 15, 18, 21, 24$)
and $3 \mid \ord_{2}(\Delta_{E_{m, n}})$ (occurs when $m = 4, 20$).
\end{rem}

Therefore we have shown that $E_{m, n}$ with $m$ and $n$ chosen in \eqref{nchoice} satisfy Properties \eqref{pcond}--\eqref{cond4}.
For each $m$, we claim that there are infinitely many $n$ with $\delta_{m}(n)$ being squarefree which would give us 28 infinite families of elliptic curves.

We consider the case when $m$ is odd, then $i = 1$ and we need to show that there
are infinitely many $n$ such that
\begin{align}\label{oddsqf}
\delta_{m}(n) = (m + 24n)(62208n^{2} + (5184m \mp 432)n + (108m^{2} \mp 18m + 1))
\end{align}
is squarefree. However, since $m$ is fixed, by \cite[Theorem, p. 417]{erdos},
we see that there are indeed infinitely many $n$ for each fixed $m$ such that \eqref{oddsqf}
is squarefree.
The cases when $m$ is even is exactly the same since $\delta_{m}(n)$ in each case is a cubic.
%
%

We have shown the following proposition.
\begin{prop}\label{emnprop}
For $m = 1, 2, 5, 7, 8, 10, 11, 13, 14, 16, 17, 19, 22, 23$, there are infinitely many positive integers $n$
such that $E_{m, n}/\Q$ satisfies Properties \eqref{pcond}--\eqref{cond4}. In particular the desired $n$
are chosen such that $\delta_{m}(n)$ is squarefree.
\end{prop}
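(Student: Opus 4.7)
The plan is to verify the four properties \eqref{pcond}--\eqref{cond4} in turn for the family $E_{m,n}$ defined by the minimal equation \eqref{mineqn}, and then to appeal to Erd\H{o}s's theorem on squarefree values of cubic polynomials to conclude infinitude of admissible $n$.

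Property \eqref{pcond} is immediate from the construction: $E_{m,n}$ was defined as $C_{m,n}/\gen{P}$ where $P = (0,b)$ is a rational $3$-torsion point on $C_{m,n}$, and V\'elu-type formulas as in \cite[p.~372]{cohen} express the quotient in the form \eqref{emnold}, which is brought to global minimal form by the translation of Remark \ref{remtrans}. For Properties \eqref{cond3} and \eqref{cond4}, I would work directly with the explicit discriminant formula \eqref{egendisc}. Reducing it modulo $3$ gives $\mp 2m \imod{3}$, which is nonzero for each of the chosen values of $m$, yielding \eqref{cond3}. For \eqref{cond4}, the hypothesis that $\delta_m(n)$ is squarefree forces every odd prime of bad reduction to divide $\Delta_{E_{m,n}}$ exactly once, and at the prime $2$ a short case analysis (according to whether $m$ is odd, $m \in \{2, 10, 14, 22\}$, or $m \in \{8, 16\}$) pins $\ord_2(\Delta_{E_{m,n}})$ to a value in $\{1, 2, 4, 5\}$, none of which is a multiple of $3$.

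The main obstacle is Property \eqref{cond2}, semistability, which requires ruling out additive reduction at $2$ when $m$ is even. For odd $m$ the discriminant has $2$-adic valuation exactly $1$, so $2$ is automatically multiplicative and semistability is immediate. For even $m$, I would compute the invariant $c_4'$ of the minimal model \eqref{mineqn} via the change-of-variables formulas recalled in Remark \ref{remtrans}; since $A = 18(m+24n) \mp 1$ is always odd, the expression $A(9A^3 \pm 8)$ is odd, which forces $\ord_2(c_4') = 0$. Combined with $\ord_2(\Delta_{E_{m,n}}) > 0$, the criterion of \cite[Proposition~4.4]{schmitt} then yields multiplicative reduction at $2$, so $E_{m,n}$ is semistable in every case. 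This step is where the delicate bookkeeping with the parameters lives, and it is the only place one actually has to look inside the minimal model.

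For the infinitude of valid $n$, the key observation is that for each fixed $m$, the polynomial $\delta_m(n)$ is a cubic in $n$ with no fixed square divisor (which may be checked at small primes using that $(m,6)=1$ in the odd cases and by handling the factor of $2$ separately in the even cases). Erd\H{o}s's theorem on squarefree values of polynomials of degree at most $3$, as cited in \cite[Theorem, p.~417]{erdos}, then produces infinitely many $n \in \Z_{\geq 0}$ with $\delta_m(n)$ squarefree, completing the verification of the proposition for all fourteen values of $m$ (and hence twenty-eight families once the sign convention in \eqref{nchoice} is taken into account).
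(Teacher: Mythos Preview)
Your proposal is correct and follows essentially the same route as the paper: Property~\eqref{pcond} by construction, Properties~\eqref{cond3} and~\eqref{cond4} from the explicit discriminant \eqref{egendisc} together with the squarefreeness hypothesis on $\delta_m(n)$, semistability at $2$ via the computation $c_4' = A(9A^3 \pm 8) \equiv 1 \pmod 2$ combined with \cite[Proposition~4.4]{schmitt}, and finally infinitude of admissible $n$ by Erd\H{o}s's theorem on squarefree values of cubics. Your explicit mention of checking that $\delta_m(n)$ has no fixed square divisor is a point the paper leaves implicit in its citation of \cite{erdos}.
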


\begin{rem}\label{npor}
Not only are there infinitely many such $n$, in fact for each fixed $m$, a positive proportion of $n$ are such that $\delta_{m}(n)$ is squarefree.
Fix an $m$. It can be shown that
$\#\{n \in [1, x]: \delta_{m}(n) \text{ is squarefree}\} \sim cx$ where $c = \prod_{p}(1 - \beta_{m}(p^{2})/p^{2})$
with $\beta_{m}(p^{2}) = \#\{a \imod{p^{2}}: \delta_{m}(a) \equiv 0 \imod{p^{2}}\}$. Further computation yields that
$c\approx 0.85164041$ if $m \neq 8, 16$ and $\approx 0.6387303$ if $m = 8, 16$.
\end{rem}

Since $E_{m, n}$ is semistable, all primes dividing the conductor are of multiplicative reduction. We now show
which ones are split and which ones are nonsplit. This result will play a crucial role in showing that $\om(E_{m, n}^{(d)}) = -1$
for our chosen $d$.
\begin{prop}\label{emnsplit}
Let $\ell$ be a prime of bad reduction for $E_{m, n}$ with $m$ and $n$ chosen as in the discussion around \eqref{nchoice}.
Then $\ell$ is multiplicative reduction and
\begin{enumerate}[(i)]
\item If $\ell = 2$, then $\ell$ is split.
\item If $\ell \mid m + 24n$, then $\ell$ is split.
\item Suppose $\ell \mid 62208n^{2} + (5184m \mp 432)n + (108m^{2} \mp 18m + 1)$. If $\ell \equiv 1 \imod{3}$, then $\ell$ is split.
If instead $\ell \equiv 2 \imod{3}$, then $\ell$ is nonsplit.
\end{enumerate}
\end{prop}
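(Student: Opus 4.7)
My plan is to apply the three split/nonsplit criteria recalled in the remark after Proposition~\ref{prop1}: at a multiplicative prime $\ell = 2$, $E$ is split iff the quadratic $x^2 + a_1 x + (a_3 a_1^{-1} + a_2)$ has a root in $\F_2$; at a multiplicative prime $\ell \neq 2, 3$, $E$ is split iff $-c_4 c_6$ is a square mod $\ell$. Since multiplicative reduction at $\ell \geq 5$ forces $c_4' \not\equiv 0 \pmod \ell$ (else the reduction would be cuspidal rather than nodal) and $c_4'^{3} \equiv c_6'^{2} \pmod \ell$ then shows $c_4'$ is a nonzero square mod $\ell$, the second criterion simplifies to ``$-c_6'$ is a square mod $\ell$''. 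I will apply these to the global minimal model \eqref{mineqn}, using $c_4' = A(9A^3 \pm 8)$ from \eqref{c4p} together with
\[
c_6' = 27 A^6 \pm 36 A^3 + 8,
\]
which is obtained from $c_6 = -64 D^3 A^6 + 576 D^2 A^3 B - 864 D B^2 = 1728 A^6 \pm 2304 A^3 + 512$ (the invariant of the non-minimal model $Y^2 = X^3 + D(AX + B)^2$) by dividing by $u^6 = 64$ as in Remark~\ref{remtrans}.

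For case (i), the minimal model \eqref{mineqn} has $a_1 = 1$ and $a_2 = a_3 = 0$, so the Schmitt quadratic is $x^2 + x = x(x+1)$, which factors over $\F_2$; hence $\ell = 2$ is split.

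For cases (ii) and (iii), the key input is the factorization $A^3 \pm 1 = 27 \beta(27 \beta^2 \mp 9\beta + 1)$ with $\beta = 2(m + 24n)$, already used in the derivation of \eqref{oldisc1}. Any prime $\ell \neq 2, 3$ dividing $m + 24n$ (case (ii); note $\ell \neq 3$ by Property~\eqref{cond3}, which forces $3 \nmid m$ and hence $3 \nmid m + 24n$) or dividing the second factor $27 \beta^2 \mp 9 \beta + 1$ (case (iii); here $\ell \neq 2, 3$ because this factor is $\equiv 1 \pmod 6$) divides $A^3 \pm 1$, so $A^3 \equiv \mp 1$ and $A^6 \equiv 1 \pmod \ell$. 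Substituting gives
\[
c_6' \equiv 27 \cdot 1 \pm 36 \cdot (\mp 1) + 8 = 27 - 36 + 8 \equiv -1 \pmod \ell,
\]
whence $-c_6' \equiv 1$ is trivially a square mod $\ell$. This settles (ii) and the split clause of (iii).

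Finally, the nonsplit clause of (iii) turns out to be vacuous. Regarding $27 T^2 \mp 9 T + 1$ as a quadratic in $T$ with discriminant $81 - 108 = -27$, a prime $\ell \geq 5$ can divide $27 \beta^2 \mp 9\beta + 1$ for some integer $\beta$ only if $-27$ is a square mod $\ell$; since $-27 = 9 \cdot (-3)$, this is equivalent to $\left( \frac{-3}{\ell} \right) = 1$, which by quadratic reciprocity happens precisely when $\ell \equiv 1 \pmod 3$. Hence no prime $\ell \equiv 2 \pmod 3$ can divide the second factor, and the nonsplit conclusion holds vacuously. The only substantive step is computing $c_6'$ for the minimal model; once that is done, every split case collapses to the single identity $27 - 36 + 8 = -1$, and the nonsplit clause reduces to one line of reciprocity.
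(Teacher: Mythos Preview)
Your argument is correct, and in fact sharper than the paper's. Both proofs agree on case~(i) and on the formulas $c_4' = A(9A^3 \pm 8)$, $c_6' = 27A^6 \pm 36A^3 + 8$. From there the paper works directly with the criterion ``$-c_4'c_6'$ is a square mod~$\ell$'': in case~(ii) it uses $A \equiv \mp 1$ to get $-c_4'c_6' \equiv 1$; in case~(iii) it exploits the relation $3f_m(n) = A^2 \mp A + 1$ to compute $A^3, A^4, A^6$ modulo~$\ell$ and obtains $-c_4'c_6' \equiv \mp A \equiv -2^2 3^3 (m+24n)^2$, whence the split/nonsplit dichotomy is governed by $\left(\frac{-3}{\ell}\right)$.

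You take a shorter route by first observing that at any multiplicative prime $\ell \geq 5$ one has $c_4' \not\equiv 0$ and $c_4'^{\,3} \equiv c_6'^{\,2} \pmod{\ell}$, so $c_4'$ is automatically a nonzero square and the criterion collapses to ``$-c_6'$ is a square''. Since $f_m(n) = 27\beta^2 \mp 9\beta + 1$ divides $A^3 \pm 1$, in both (ii) and (iii) you get $A^3 \equiv \mp 1$, $A^6 \equiv 1$, and hence $-c_6' \equiv 1$, which is always a square. Your discriminant computation then shows that the hypothesis ``$\ell \equiv 2 \pmod 3$ and $\ell \mid f_m(n)$'' is \emph{empty}, so the nonsplit clause of~(iii) holds vacuously. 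This is a genuine strengthening the paper does not record; in particular it would streamline the proof of Proposition~\ref{signprop}, where the paper only argues that the number of such primes is even (from $f_m(n) \equiv 1 \pmod 6$), whereas you have shown it is zero.
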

\begin{proof}
Since $E_{m, n}$ is semistable, all primes of bad reduction are of multiplicative reduction.
If $\ell =2 $, then by \cite[Proposition 4.4]{schmitt}, if $x^{2} + a_{1}'x + (a_{3}'a_{1}'^{-1} + a_{2}')$ has a root in $\F_{2}$,
then $2$ is split. Since $a_{1}' = 1$, $a_{2}' = 0$, and $a_{3}' = 0$ and $x^{2} + x$ has both roots in $\F_{2}$, it follows that 2 is split.

If $\ell \neq 2$, then by the same proposition cited above, if $-c_{4}'c_{6}'$ is a square in $\F_{\ell}$, then $\ell$ is split.
If not, then $\ell$ is nonsplit.

Now suppose $\ell \mid m + 24n$. Then $A \equiv \mp 1 \imod{\ell}$. Recall from \eqref{c4p} that $c_{4}' = A(9A^{3} \pm 8)$.
We now compute $c_{6}'$. From the change of variables formula referenced to in Remark \ref{remtrans}, $2^{6}c_{6}' = c_{6}$.
We have $b_{2} = 2^{2}A^{2}D, b_{4} = 2^{2}ABD, b_{6} = 2^{2}B^{2}D$ and hence
\begin{align*}
c_{6} = -b_{2}^{3} + 36b_{2}b_{4} - 216b_{6} = -2^{6}D^{3}A^{6} + 2^{6}3^{2}BD^{2}A^{3} - 2^{5}3^{3}B^{2}D = 2^{6}3^{3}A^{6} \pm 2^{8}3^{2}A^{3} + 2^{9}.
\end{align*}
This implies that
$c_{6}' = 27A^{6} \pm 36A^{3} + 8.$
Then
\begin{align*}
c_{4}'c_{6}' = A(27A^{6} \pm 36A^{3} + 8)(9A^{3}\pm 8) \equiv \mp 1 (27 - 36 + 8)(\mp 1) \equiv -1 \imod{\ell}
\end{align*}
and hence $-c_{4}'c_{6}'$ is a square in $\F_{\ell}$. Thus for all $\ell \mid m + 24n$, $\ell$ is split.

Let $f_{m}(n) := 62208n^{2} + (5184m \mp 432)n + (108m^{2} \mp 18m + 1)$.
We now consider the case when $\ell \mid f_{m}(n)$. Observe that $3f_{m}(n) = A^{2} \mp A + 1$. Then taking both sides
modulo $\ell$ yields that $A^{2} \equiv \pm A -1 \imod{\ell}$.
This implies that modulo $\ell$, we have
\begin{align*}
A^{6} &\equiv (\pm A - 1)^{3} = \pm A^{3} -3A^{2} \pm 3A - 1 \equiv A^{2} \mp A + 2 \equiv 1 \imod{\ell},\\
A^{4} &\equiv (\pm A - 1)^{2} = A^{2} \mp 2A + 1 \equiv \mp A \imod{\ell},
\end{align*}
and
$A^{3} \equiv (\pm A - 1)A \equiv \pm(\pm A - 1) - A = \mp 1 \imod{\ell}.$
Therefore
\begin{align*}
-c_{4}'c_{6}' &= -(9A^{4}\pm 8A)(27A^{6} \pm 36A^{3} + 8) \equiv -(9(\mp A) \pm 8A)(27 \pm 36(\mp 1) + 8) = \mp A \imod{\ell}.
\end{align*}
Since $f_{m}(n) \equiv 0 \imod{\ell}$ and $\mp A = f_{m}(n) - 108(m + 24n)^{2}$,
\begin{align*}
-c_{4}'c_{6}' \equiv \mp A \equiv \mp A - f_{m}(n) = -2^{2}3^{3}(m + 24)^{2} \imod{\ell}
\end{align*}
and hence $-c_{4}'c_{6}'$ is a square in $\F_{\ell}$ if and only if the Legendre symbol $(-3/\ell) = 1$.
Since $(-3/\ell) = 1$ if and only if $\ell \equiv 1 \imod{3}$, it follows that if $\ell \mid f_{m}(n)$ and
$\ell \equiv 1 \imod{3}$, then $\ell$ is split and if instead $\ell \equiv 2 \imod{3}$, then $\ell$ is nonsplit.
This completes the proof of Proposition \ref{emnsplit}.
\end{proof}

\subsection{Torsion Subgroups and Global Root Number}
The following two propositions shows that for certain $m$, $E_{m, n}$ satisfies Assumption \ref{asmp5a}.
\begin{prop}\label{torprop}
Let $m = 1, 2, 5, 7, 8, 10, 11, 13, 14, 16, 17, 19, 22, 23$ and $n$ be defined as in \eqref{nchoice}. If $d$ is a positive squarefree integer
with $d \equiv 1 \imod{12}$, then $E_{m, n}^{(d)}(\Q)_{\textrm{tors}} = 0$.
\end{prop}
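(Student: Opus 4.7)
The plan is to bound $E_{m,n}^{(d)}(\Q)_{\text{tors}}$ via good reduction at the prime $3$ and then rule out residual $2$- and $3$-torsion by separate arguments.

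Since $d \equiv 1 \imod 3$ and $d$ is a $3$-adic unit, $d$ is a square in $\Q_3^\times$, so $E_{m,n}^{(d)} \cong E_{m,n}$ over $\Q_3$. Combined with good reduction of $E_{m,n}$ at $3$ (Property~\eqref{cond3}), the prime-to-$3$ part of $E_{m,n}^{(d)}(\Q)_{\text{tors}}$ injects into $\wt{E}_{m,n}(\F_3)$. A refinement of the congruence calculations behind Lemmas~\ref{exlem1} and~\ref{exlem2} would show $H(m,n) \equiv 0 \imod 3$ and $J(m,n) \equiv \mp m \imod 3$, and a direct point-count on $y^2 + xy = x^3 + J$ over $\F_3$ gives $\#\wt{E}_{m,n}(\F_3) \in \{3,6\}$; hence the prime-to-$3$ part divides $2$. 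To kill the $3$-part, I would invoke the Galois action $\smat{\chi_d \omega}{*}{0}{\chi_d}$ on $E_{m,n}^{(d)}[3]$ derived in Section~\ref{3sel}: a nonzero fixed vector forces either $\chi_d \equiv 1$ (so $d$ is a square, forcing $d=1$) or $\chi_d \omega \equiv 1$ (so $d \equiv -3 \imod (\Q^\times)^2$, impossible since $d > 0$). The edge case $d=1$ reduces to verifying that the kernel of $\hat\phi$ is the unique rational order-$3$ subgroup of $E_{m,n}[3]$, equivalently that the 3-torsion extension $0 \to \F_3(\omega) \to E_{m,n}[3] \to \F_3 \to 0$ is non-split.

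To eliminate $2$-torsion, quadratic twisting preserves the 2-torsion $x$-coordinates, and the 3-isogeny $\hat\phi: E_{m,n} \to C_{m,n}$ restricts to an isomorphism on 2-torsion since $\gcd(2,3)=1$; so it suffices to show $C_{m,n}(\Q)[2] = 0$. Setting $Y=0$ in $Y^2 = X^3 + (aX+b)^2$ requires $-X^3 = (aX+b)^2$ to be a perfect square, so $X = -s^2$ for some $s \in \Z_{\geq 0}$, and substituting yields the Diophantine condition $b = s^2(a \pm s)$. The squarefreeness of $\delta_m(n) = \beta K$, with $\beta = m+24n$ and $K = 108\beta^2 \mp 18\beta + 1$, together with the explicit 2-adic valuation of $b = 8\beta K$, restricts $s^2 \mid b$ to a small finite set: $s \in \{1,2\}$ for odd $m$, $s \in \{1,2,4\}$ for even $m \notin \{8,16\}$, and $s \in \{1,2,4,8\}$ for $m \in \{8,16\}$. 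Since $b \geq 8 \cdot 91 \cdot \beta$ for $\beta \geq 1$ whereas $s^2(a \pm s) = O(\beta)$ uniformly over the admissible $s$, the strict inequality $b > s^2(a \pm s)$ excludes every candidate.

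The main obstacle I anticipate is the case analysis in the final step — tracking $\ord_2(b)$ separately across the three parity regimes of $m$ to pin down the maximal admissible $s$, and then confirming the magnitude inequality $b > s^2(a \pm s)$ uniformly. The $3$-torsion step is essentially automatic once one notes that $d > 0$ prevents $\chi_d \omega$ from being trivial, modulo the small verification in the edge case $d = 1$.
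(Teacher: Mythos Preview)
Your reduction mod $3$ and the Galois-representation argument for $3$-torsion are essentially the paper's proof; the paper computes $J(m,n)\equiv\pm 2m\equiv\mp m\imod 3$ and obtains the same point counts $\#\wt{E}_{m,n}(\F_3)\in\{3,6\}$. You are in fact more careful than the paper about $d=1$: the paper simply asserts that neither $\chi_d\om$ nor $\chi_d$ is trivial, which is false for $d=1$, whereas you correctly note that this edge case reduces to non-splitness of $0\to\F_3(\om)\to E_{m,n}[3]\to\F_3\to 0$ (you do not verify this, but in the intended application $d\equiv 5\imod 8$, so $d=1$ never occurs).

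Your $2$-torsion argument takes a genuinely different route. The paper works directly on $E_{m,n}^{(d)}$: the discriminant of its $2$-division cubic $8y^3 - dy^2 + 2Hd^2 y - Jd^3$ is $\mp 8d^6(m+24n)K$, and the paper infers irreducibility from this being a non-square (a quick one-line argument, though as phrased it elides the point that a non-square cubic discriminant alone does not exclude a rational root). You instead transport the problem to $C_{m,n}$ via twist-invariance of $2$-torsion and the $3$-isogeny, then solve $b=s^2(a\pm s)$ over $\Z$ using the squarefreeness of $\delta_m(n)$ to bound $s$. This is longer but fully elementary. One correction: your linear lower bound $b\geq 728\beta$ is too weak when $m\in\{8,16\}$ and $s$ ranges up to $8$, since then $s^2(a+s)$ can reach roughly $1152\beta$; use the cubic bound $b=8\beta K\geq 8\beta(108\beta^2-18\beta+1)$ instead, which for $\beta\geq 8$ easily dominates.
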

\begin{proof}
From the discussion before \eqref{cohomseq}, the image of the mod 3 Galois representation corresponding to $E_{m, n}^{(d)}$ in $\operatorname{GL}_{2}(\F_{3})$ looks
like $\smat{\chi_{d}(\sigma)\om(\sigma)}{\ast}{0}{\chi_{d}(\sigma)}.$
In particular, neither $\chi_{d}\om$ nor $\chi_{d}$ are trivial characters and hence it follows that $E_{m, n}^{(d)}$ has no rational 3-torsion points
(since if $E_{m, n}^{(d)}(\Q)[3] \neq \emptyset$, then top left entry in the above matrix would be 1, however, this is not the case since
neither of $\chi_{d}\om$ nor $\chi_{d}$ are trivial characters).

We now consider $\wt{E}_{m, n}^{(d)}(\F_{3})$. Note that 3 is of good reduction for $E_{m, n}^{(d)}$.
By the general formula for a quadratic twist of an elliptic curve over a field of characteristic 0
(see for example \cite[p. 410]{connell}),
\begin{align}\label{teq}
E_{m, n}^{(d)}/\Q \colon y^{2} + xy = x^{3} + \frac{d - 1}{4}x^{2} + H(m, n)d^{2}x + J(m, n)d^{3}.
\end{align}
Note that the above equation is not the global minimal Weierstrass equation.
Recall from \eqref{mineqn}, Lemma \ref{exlem1}, and Lemma \ref{exlem2} that
$$H(m, n) = \frac{1}{48}(1 \mp 8A - 9A^{4}), \quad J(m, n) = -\frac{1}{2^{6}3^{2}}(5 \pm 8A \pm 24A^{3} + 9A^{4} + 18A^{6}).$$
Since $A \equiv \mp 1 \imod{9}$, $1 \mp 8A \equiv 0 \imod{9}$ and hence
$$2^{4}\cdot 3H(m, n) = 1 \mp 8A - 9A^{4} \equiv 0 \imod{9}$$ which implies that $H(m, n) \equiv 0 \imod{3}$.
Setting $\alpha := 2\cdot 3^{2}(m + 24n)$, from \eqref{alphacalc},
$$-2^{6}3^{2}J(m, n) = 5 \pm 8A \pm 24A^{3} + 9A^{4} + 18A^{6} \equiv \mp 2^{6}\alpha \equiv \mp 2^{7}3^{2}(m + 24n) \imod{27}.$$
This implies that $J(m, n) \equiv \pm 2m \imod{3}.$
Therefore depending on $\pm 2m \imod{3}$, we have two cases, when $J(m, n) \equiv 1 \imod{3}$ or when $J(m, n) \equiv 2 \imod{3}$.
As $d \equiv 1 \imod{12}$, we have
\begin{align}\label{redeq}
\wt{E}_{m, n}^{(d)}/\F_{3} \colon y^{2} + xy = x^{3} + J(m, n).
\end{align}
First, suppose $J \equiv 1 \imod{3}$. From \eqref{redeq}, we have that the reduced equation is $y^{2} + xy = x^{3} + 1$.
One can compute that $\wt{E}_{m, n}^{(d)}(\F_{3}) = \Z/6\Z$ and hence $\#\wt{E}_{m, n}^{(d)}(\F_{3}) = 6$.
Second, suppose $J \equiv 2 \imod{3}$. From \eqref{redeq}, we have that the reduced equation is $y^{2} + xy = x^{3} + 2$.
One can compute that $\wt{E}_{m, n}^{(d)}(\F_{3}) = \Z/3\Z$ and hence $\#\wt{E}_{m, n}^{(d)}(\F_{3}) = 3$.
Since $E_{m, n}^{(d)}$ has good reduction modulo 3, $E_{m, n}^{(d)}(\Q)_{\textrm{tors}}$ injects into $\wt{E}_{m, n}^{(d)}(\F_{3})$
and since $E_{m, n}^{(d)}/\Q$ has no rational 3-torsion, if $J \equiv 2 \imod{3}$, $E_{m, n}^{(d)}(\Q)_{\textrm{tors}} = 0$.

If $J \equiv 1 \imod{3}$, to show that $E_{m, n}^{(d)}(\Q)_{\textrm{tors}} = 0$, it suffices to show that $E_{m, n}^{(d)}/\Q$ has
no rational 2-torsion points. To do this, we use the 2-division polynomial for $E_{m, n}^{(d)}$.
Suppose $E_{m, n}^{(d)}/\Q$ had a rational 2-torsion point $(x, y)$. Then $\psi_{2}(x, y) = 2y + x = 0$ and hence $x = -2y$.
Using the expression in \eqref{teq} yields that $y$ must satisfy
\begin{align}\label{ypol}
8y^{3} - dy^{2} + 2H(m, n)d^{2}y - J(m, n)d^{3} = 0.
\end{align}
However, we claim that \eqref{ypol} is irreducible over $\Q$. Indeed, computing the discriminant of the polynomial
on the left hand side yields
\begin{align}\label{ypdisc}
\mp 8d^{6}(m + 24n)(62208n^{2} + (5184m \mp 432)n + (108m^{2} \mp 18m + 1)).
\end{align}
As $n$ is chosen such that $(m + 24n)(62208n^{2} + (5184m \mp 432)n + (108m^{2} \mp 18m + 1))$ is squarefree,
it follows that \eqref{ypdisc} is not a square in $\Q$. Therefore \eqref{ypol} has Galois group $S_{3}$ and is irreducible
over $\Q$. Therefore no such $y$ exists and hence $E_{m, n}^{(d)}/\Q$ has no rational 2-torsion point.
It follows that $E_{m, n}^{(d)}(\Q)_{\textrm{tors}} = 0$ in this case.

This completes the proof of Proposition \ref{torprop}.
\end{proof}

\begin{prop}\label{signprop}
Let $m = 1, 7, 13, 19$ and $d$ be chosen as in the discussion around \eqref{dcond} and \eqref{dtab2}. Then the global root number
$\om(E_{m, n}^{(d)}) = -1$.
\end{prop}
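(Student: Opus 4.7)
The plan is to compute $\om(E_{m,n}^{(d)})$ as a product of local root numbers and show that the factors cancel to give $-1$. Writing $\om(E_{m,n}^{(d)}) = \om_\infty \cdot \prod_\ell \om_\ell(E_{m,n}^{(d)})$, the archimedean factor is $\om_\infty = -1$ and primes of good reduction contribute $+1$, so I only need to control primes $\ell \mid N_{E_{m,n}}$ (where $E^{(d)}$ has multiplicative reduction) and primes $\ell \mid d$ (where $E^{(d)}$ has additive reduction of type $I_0^\ast$).

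For the multiplicative primes I would combine Proposition \ref{emnsplit} (giving the splitting type of $E_{m,n}$) with Proposition \ref{prop1} together with the choice of $d$ in \eqref{dtab2} (telling me when $d$ is a square in $\Q_\ell$, equivalently when $E_{m,n}^{(d)}$ has the same splitting type as $E_{m,n}$). At $\ell = 2$, $E_{m,n}$ is split and $d \equiv 5 \imod 8$ is not a square in $\Q_2$, so $E_{m,n}^{(d)}$ is nonsplit and $\om_2 = +1$. For odd $\ell \mid N_{E_{m,n}} = 2\delta_m(n)$ with $\delta_m(n) = (m + 24n)f_m(n)$ and $f_m(n) = 62208n^2 + (5184m \mp 432)n + (108m^2 \mp 18m + 1)$, a case-by-case check across all four subcases of \eqref{dtab2} shows that the choices are precisely tuned so that $\om_\ell(E_{m,n}^{(d)}) = +1$ whenever $\ell \equiv 1 \imod 3$ and $\om_\ell(E_{m,n}^{(d)}) = -1$ whenever $\ell \equiv 2 \imod 3$. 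Consequently
$$\prod_{\ell \mid N_{E_{m,n}},\, \ell \neq 2} \om_\ell(E_{m,n}^{(d)}) = \prod_{\ell \mid \delta_m(n)} \left(\frac{\ell}{3}\right) = \left(\frac{\delta_m(n)}{3}\right).$$
A quick congruence computation closes this: every coefficient of $f_m(n)$ except the constant term is divisible by $3$, and $108m^2 \mp 18m + 1 \equiv 1 \imod 3$, so $f_m(n) \equiv 1 \imod 3$ and hence $\delta_m(n) \equiv m \imod 3$. For $m \in \{1, 7, 13, 19\}$ we have $m \equiv 1 \imod 3$, so the Jacobi symbol equals $+1$ and the total multiplicative contribution is $+1$.

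For the additive primes $\ell \mid d$ I would invoke the standard local root number formula for the quadratic twist of a good-reduction elliptic curve by a ramified quadratic character. Since $d \equiv 1 \imod{12}$ and $(d, \Delta_{E_{m,n}}) = 1$, every such $\ell$ is odd, different from $3$, and of good reduction for $E_{m,n}$ with $\ord_\ell(d) = 1$; the formula (see e.g.\ Rohrlich's work on root numbers, or Rizzo's tables) gives $\om_\ell(E_{m,n}^{(d)}) = \chi_d(-1) = (-1, d)_\ell$, which by the tame Hilbert symbol formula equals $\left(\tfrac{-1}{\ell}\right)$. Therefore
$$\prod_{\ell \mid d} \om_\ell(E_{m,n}^{(d)}) = \prod_{\ell \mid d} \left(\frac{-1}{\ell}\right) = \left(\frac{-1}{d}\right) = (-1)^{(d-1)/2} = +1,$$
using that $d$ is positive squarefree with $d \equiv 1 \imod 4$. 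Assembling everything,
$$\om(E_{m,n}^{(d)}) = (-1) \cdot (+1) \cdot (+1) \cdot (+1) = -1.$$

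The main obstacle is the bookkeeping: there are six split/nonsplit$\,\times\,$residue-class subcases in \eqref{dtab2}, and one must verify that the choice of $d$ consistently produces the sign pattern $(\ell/3)$ in each. The other delicate point is citing the correct local root number formula for the additive primes; the hypothesis $d \equiv 1 \imod 3$ is crucial here, since it excludes $\ell = 3$ from the additive primes, where the simple formula $\om_\ell = \chi_d(-1)$ would fail.
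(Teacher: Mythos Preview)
Your proof is correct and follows essentially the same route as the paper's: both compute the global root number as a product of local factors, handle the additive primes $\ell \mid d$ via $\om_\ell = \bigl(\tfrac{-1}{\ell}\bigr)$ and the condition $d \equiv 1 \pmod 4$, and handle the multiplicative primes by combining Proposition~\ref{emnsplit} with Proposition~\ref{prop1} and the table~\eqref{dtab2}. The only difference is cosmetic: where the paper counts the parity of split multiplicative primes of $E_{m,n}^{(d)}$ directly (observing that both $m+24n$ and $f_m(n)$ are $\equiv 1 \pmod 3$, so each contributes an even number of primes $\equiv 2 \pmod 3$), you package the same observation as the Jacobi symbol identity $\prod_{\ell \mid \delta_m(n)} \bigl(\tfrac{\ell}{3}\bigr) = \bigl(\tfrac{\delta_m(n)}{3}\bigr) = \bigl(\tfrac{m}{3}\bigr) = +1$.
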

\begin{proof}
By the discussion around Assumption \ref{asmp1}, recall that $E_{m, n}^{(d)}$  has additive reduction at all primes dividing $d$.
As $d \equiv 1 \imod{4}$, there are an even number of primes $p \mid d$ which are $3 \imod{4}$.
Since each such $p$ contributes a factor of $(-1/p)$ to $\om(E_{m, n}^{(d)})$ and nonsplit multiplicative primes of $E_{m, n}^{(d)}$
contribute a factor of 1 to $\om(E_{m, n}^{(d)})$, it follows that
$\om(E_{m, n}^{(d)}) = -(-1)^{S}$ where $S$ is the number of primes of split multiplicative reduction for $E_{m, n}^{(d)}$.
It suffices to show that for the chosen $m$, $E_{m, n}^{(d)}$ has an even number of primes of split multiplicative reduction.

Let $\ell$ be a prime of multiplicative reduction. Then by the discussion around Assumption \ref{asmp1}, $\ell \mid N_{E_{m, n}}$.
By Proposition \ref{emnsplit}, Proposition \ref{prop1}, and the choice of $d$ in \eqref{dtab2}:
\begin{enumerate}[($a$)]
\item If $\ell = 2$, then $2$ is of split multiplicative reduction for $E_{m, n}$. Since $d \equiv 5 \imod{8}$, $d$ is not a square
in $\Q_{2}$ and hence $2$ is of nonsplit multiplicative reduction for $E_{m, n}^{(d)}$.
\item If $\ell \mid m + 24n$, then $\ell$ is of split multiplicative reduction for $E_{m, n}$.
Observe that as $m = 1, 7, 13, 19$, $m + 24n \equiv 1 \imod{3}$, $\ell \geq 5$ and there are an even number of primes which are $2 \imod{3}$ and divide $m + 24n$.
If $\ell \equiv 1 \imod{3}$, then $d \not\equiv \square\imod{\ell}$
and hence $\ell$ is of nonsplit multiplicative reduction for $E_{m, n}^{(d)}$. If $\ell \equiv 2 \imod{3}$, then $d \equiv 1 \imod{\ell}$
and hence $\ell$ is of split multiplicative reduction for $E_{m, n}^{(d)}$.
\item If $\ell \mid 62208n^{2} + (5184m \mp 432)n + (108m^{2} \mp 18m + 1)$, we consider when $\ell \equiv 1 \imod{3}$ and when $\ell \equiv 2 \imod{3}$.
Since
$$62208n^{2} + (5184m \mp 432)n + (108m^{2} \mp 18m + 1) \equiv 1 \imod{6},$$
$\ell \neq 2$ and there are an even number of prime factors which are $2 \imod{3}$.
If $\ell \equiv 1 \imod{3}$, then $\ell$ is of split multiplicative reduction for $E_{m, n}$ and hence by our choice of $d$, $\ell$ is of nonsplit multiplicative reduction
for $E_{m, n}^{(d)}$. If $\ell \equiv 2 \imod{3}$, then $\ell$ is of nonsplit multiplicative reduction for $E_{m, n}$ and hence by our choice of $d$, $\ell$ is of split
multiplicative reduction for $E_{m, n}^{(d)}$.
\end{enumerate}
Therefore $E_{m, n}^{(d)}$ has an even number of primes of split multiplicative reduction. This implies that
$\om(E_{m, n}^{(d)}) = -1$. This completes the proof of Proposition \ref{signprop}.
\end{proof}

The proof of Theorem \ref{exthm} is now immediate.
\begin{proof}[Proof of Theorem \ref{exthm}]
To prove Theorem \ref{exthm}, it suffices to verify that $E_{m, n}/\Q$, $m = 1$, $7$, $13$, $19$
satisfies Assumptions \ref{asmp1}--\ref{asmp3}, \ref{asmp5a}, \ref{asmp5} (note that Assumptions \ref{asmp4a} and \ref{asmp4} do not deal
with the elliptic curve).
By Property \eqref{cond2}, $E_{m, n}$ is semistable and satisfies Assumption \ref{asmp1}.
Since $m$ is odd, by the paragraph immediately following \eqref{egendisc},
$\Delta_{E_{m, n}} = \mp 2\delta_{m}(n)$ where $\delta_{m}(n)$ is squarefree and relatively
prime to 2. Therefore $\ord_{2}(\Delta_{E_{m, n}}) = 1$ and in particular is odd.
This and as $E_{m, n}$ satisfy Properties \eqref{cond3} and \eqref{cond4} imply
that $E_{m, n}$ satisfies Assumption \ref{asmp2}.
By construction, $E_{m, n}$ satisfies Property \eqref{pcond} and hence
satisfies Assumption \ref{asmp3}.
Propositions \ref{torprop} and \ref{signprop} (along with the fact that
$d \equiv 1 \imod{12}$ by Assumptions \ref{asmp1} and \ref{asmp4a})
imply that $E_{m, n}$ satisfies Assumption \ref{asmp5a}.
Finally, by Proposition \ref{emnsplit}, $\ord_{2}(N_{E_{m, n}}) = 1$
and hence Assumption \ref{asmp5} is satisfied.
This completes the proof of Theorem \ref{exthm}.
\end{proof}

Applying Corollary \ref{rescor} to the elliptic curves $E_{m, n}$, $m = 1, 7, 13, 19$ shows that assuming the rank
part of the Birch and Swinnerton-Dyer conjecture, each $E_{m, n}$ is such that a positive proportion of its quadratic
twists have rank 1.

\subsubsection*{Acknowledgements}
The author would like to thank his senior thesis advisor Professor Christopher Skinner for his guidance and helpful discussions.
The author would also like to thank the referee for the valuable suggestions on improving this paper.

\bibliographystyle{amsalpha}
\bibliography{qt_ref}

\end{document}